\tikzstyle{place}=[circle,draw=black,fill=blue!20,thick,
\tikzstyle{dot}=[circle,draw=black,fill=black!20,thick,
\tikzstyle{rec}=[rectangle,draw=black,fill=black!20,thick,
\tikzstyle{pre}=[<-,shorten <=0pt, >=stealth',semithick]
\tikzstyle{post}=[->,shorten <=0pt, >=stealth',semithick]
\newtheorem{thm}{Theorem}
\newtheorem{defn}{Definition}
\newtheorem{rem}{Remark}
\numberwithin{equation}{section} \setcounter{tocdepth}{1}
\DeclareMathOperator{\bl}{\mathrm{BL}}
\DeclareMathOperator{\cp}{\mathrm{cp}}
\newcommand{\rotsubset}{\mathbin{\rotatebox[origin=c]{90}{$\subset$}}}
\def\e{\varepsilon}
\def\a{\alpha}
\def\b{\beta}
\def\g{\gamma}
\def\G{\Gamma}
\def\D{\Delta}
\def \L {\Lambda}
\def\s{\sigma}
\def\O{\Omega}
\def\o{\omega}
\def\L{\Lambda}
\def\Z{\mathbb{Z}}
\def\R{\mathbb{R}}
\def\N{\mathbb N}
\def\FF{\mathcal{F}}
\def\M{\mathcal M}
\def\GG{\mathcal G}
\def\SS{\subset\subset}
\begin{document}
\title[Gradient Gibbs measures]{
Gradient Gibbs measures and fuzzy transformations on trees 
}

\author{C. K\"ulske, P. Schriever}

\address{C.\ Kuelske\\ Fakult\"at f\"ur Mathematik,
Ruhr-University of Bochum, Postfach 102148,\,
44721, Bochum,
Germany}
\email {Christof.Kuelske@ruhr-uni-bochum.de}

\address{P.\ Schriever\\ Fakult\"at f\"ur Mathematik,
Ruhr-University of Bochum, Postfach 102148,\,
44721, Bochum,
Germany}
\email {Philipp.Schriever-d8j@ruhr-uni-bochum.de}

\begin{abstract} We study Gibbsian models of unbounded integer-valued spins on trees which 
possess a symmetry under height-shift. We develop a theory relating boundary laws 
to gradient Gibbs measures, which applies also in cases 
where the corresponding Gibbs measures do not exist.  
Our results extend the classical theory of Zachary \cite{Z83} 
beyond the case of normalizable boundary laws, which implies existence of Gibbs measures, 
to periodic boundary laws.  
We provide a construction for classes of 
tree-automorphism invariant gradient Gibbs measures in terms of mixtures of pinned measures, 
whose marginals to infinite paths on the tree are random walks in a $q$-periodic environment. 
Here the mixture measure is 
the invariant measure of a finite state Markov chain 
which arises as a mod-$q$ fuzzy transform, and which 
governs the correlation decay. 

The construction applies for example to SOS-models and discrete Gaussian models and delivers 
a large number of gradient Gibbs measures. 
We also discuss relations of certain gradient Gibbs measures to Potts and Ising models. 
\end{abstract}

\maketitle

{\bf Mathematics Subject Classifications (2010).} 82B26 (primary);
60K35 (secondary)

{\bf{Key words.}} 
Gibbs measure, gradient Gibbs measure, pinned gradient measure, fuzzy transform, tree. 
\section{Introduction}

There is renewed interest in the study of spin models on trees. A tree will in our context be 
a graph with countable vertex set which has no loops. On each vertex there is attached 
a random variable (spin) with values in a given local state space $\Omega_0$. 
Tree models are intrinsically interesting, and show phase transition behavior 
with sometimes greater richness than lattice models. 
Branching Brownian motion and related processes indexed by trees appear in the study of spin-glasses, see \cite{Bo16} and references therein. 

Another interest comes indirectly 
from models which are indexed by random graphs.  
Often one considers random graphs  which may contain loops but 
which at least locally look like trees with large probability \cite{DMIsing}, \cite{DMgm}, \cite{Do14}, \cite{GGHP}, \cite{MoMS12}. If one wants to understand those models 
on graphs one needs a safe understanding of the tree model first. 

There are interesting and hard questions which are open about spin models on regular trees, even if the local spin space $\Omega_0$ is finite.  
An example of 
such a question is the understanding of so-called reconstruction transitions,
including the 
determination of the reconstruction thresholds in terms of model parameters, see
\cite{FK}, \cite{JM04},\cite{KS},\cite{KuRo14}, \cite{M}, \cite{PP2010}, \cite{Sly11}. 
The problem is equivalent to decide whether a given Gibbs state is extremal 
in the simplex of all Gibbs states. It is even more delicate to describe its extremal decomposition \cite{Sh16}.
 See also \cite{BiEnEn16} for a recent work 
on the Ising model in a non-homogeneous field.

\subsection{Known theory: boundary laws, tree-indexed Markov chains for finite-state space models}
We restrict ourselves in the present paper mostly to regular trees and consider mainly 
infinite-volume states which possess all symmetries of the underlying tree graph. 
There is a very clear and complete presentation of the theory for finite spin space 
models which is presented in the textbook by Georgii, Chapter 12 \cite{Ge88}. A key notion here 
is that of a {\em boundary law} or {\em entrance law} 
which is a non-normalized distribution 
on the local spin space satisfying a certain non-linear fixed point equation which depends on the interaction 
potential $\Phi$ as a parameter and is 
obtained via a tree recursion 
\eqref{eq:bl}. The theory asserts that those Gibbs measures which are also {\em tree-indexed 
 Markov chains} (or, equivalently {\em splitting Gibbs measures}, for definitions see below)
 are in one-to-one correspondence with boundary laws, and it gives 
 the finite-volume marginals in terms of the boundary laws. 
Given the interaction potential $\Phi$ of the model, a boundary law $l$ and 
 the transition matrix $P$ of the Markov chain can be computed from each other.  
 This we symbolically depict as  
  $$P\leftrightarrow^\Phi l.$$
Also there is a theorem saying that extremal Gibbs measures are always Markov chains, while there may 
be Markov chain Gibbs measures which are non-extremal in the set of all Gibbs measures. 
This theory extends from finite to countable state space models, but it does 
 so only  {\em under the further assumption} of normalizability of the boundary law 
(see \cite{Z83}, formula 3.5). We also refer to the examples in Chapter 8 
of \cite{Ro}, which are constructed under this normalization assumption. 

How can we think of such states?
In this regime the marginals to infinite paths on the tree of the Gibbs measure are 
random walks which are localized in the height-direction, and have an invariant probability measure which is computable 
in terms of the boundary law. 

What happens to {\em non-normalizable boundary law 
solutions}Ê was not systematically investigated.

\subsection{Known theory: Gradient models on lattices}
Models with non-compact spin-space (with main examples $\R,\Z$)
which are invariant under a joint height-shift of all values of the 
spin-variables are well-known 
in statistical mechanics, under the names {\em interface models} or {\em gradient models}, see e.g.
\cite{BK07}, \cite{BK94}, \cite{BK96}, \cite{BDZ}, \cite{CK12}, \cite{CK15}, \cite{DGI}, \cite{Fu05}, \cite{FS97}, \cite{vEK08}.
In the important case that there is only a nearest neighbor pair interaction, this is then given via a potential function $U$ 
acting on differences of the spins at neighboring sites.  

Due to the non-compactness of the local spin-space, existence of Gibbs measures 
in the infinite volume is not to be taken for granted, and it may happen or may not happen, depending on the nature 
of the graph, in the lattice case its dimension, and parameters (inverse temperature, coupling strength, pinning 
forces) of the model. 
A proper infinite-volume Gibbs measure exists iff 
the interface in infinite volume is stable.  An important example where a Gibbs measure 
in the infinite volume does not exist
is provided by the lattice version of the Gaussian free field 
in dimension $d=2$, see \cite{FrPf81}, and its distribution-valued relative 
in continuous space \cite{S07}. 
The common way out to have a translation-invariant infinite-volume measure 
also in important cases when the infinite-volume {\em Gibbs} measure does not exist,  
is to divide out the translational degree of freedom in the height-direction and 
consider the {\em gradient Gibbs} measures (GGMs). 
A simple example is the pinning of a one-dimensional nearest neighbor ordinary random walk, 
conditioned to height zero in the origin. 
The increments are independent, and the measure on the increments 
is shift-invariant.


For lattice models there is a theory proving existence and uniqueness 
of GGMs with a fixed tilt (which applies in particular for flat horizontal interfaces) 
under the assumption of uniformly strictly convex potentials in dimensions $d=2$ developed by Funaki and Spohn
\cite{FS97}. (See however Remark 4.4. of \cite{Fu05} on existence for non-convex potentials.) 
This has an extension to random models 
\cite{CK12}, \cite{CK15} in dimensions $d\geq 3$, while for $d=2$ such random gradient states cannot exist \cite{vEK08} since they are locally destabilized by the influence of quenched randomness.

\subsection{Our question: How can gradient Gibbs measures on trees be constructed via boundary laws?}Ê

Are there states in the non-localized regime and can they be  constructed 
via boundary laws? 
It is the purpose of this paper to bring together the notion of {\em boundary law} 
and GGM for a tree model, and provide expressions for finite-volume 
marginals in terms of the boundary law. 
This generalizes the theory of Zachary \cite{Z83}, \cite{Z85}. 
Assume the local state space to be the integers, suppose we are in particular given 
a {\em height-periodic boundary law} with period $q$.  
The question appears naturally: 

Does such a non-normalizable boundary law (to which Zachary's theory can not apply) 
also correspond to a suitably constructed infinite-volume state?

\subsection{Outline of our Results} 
 
The answer has to be given in several steps. The short answer is yes, but the state has 
to be seen as a gradient state. 
The route to the construction and to the proof is to apply a useful mod-$q$ fuzzy transform, construct pinned measures, 
and average appropriately over the fuzzy chain. These steps will be described in more detail below. 
However, to give first an intuitive idea about the gradient states which will appear from this construction, let us look at their projections 
to an infinite path on the tree. These are (mixtures) of the increments of random walk paths. The increments are 
in general not independent, but they are given by a transition matrix which depends on 
the initial height in a $q$-periodic way. This transition matrix  depends on the boundary law. 
So, for a one-dimensional restriction on the tree,
 the paths look like random walks in a height-periodic environment. 
 The mixture measure has to be chosen very specifically 
 to recover translation-invariance {\em and } the gradient Gibbs property.  
 Clearly, for such measures a Gibbs measure can not exist, since the absolute heights of the 
 walks have no invariant probability distribution, only their increments. 

The paper is organized as follows. In Chapter 2 we give the general set-up of our model and the basic definitions. 
GGMs are obtained as the measures which satisfy the DLR equation w.r.t. the so-called {\em gradient specification}. In the case of lattice models the gradient specification $(\gamma')_{\L \subset\subset V}$ is usually simply defined as the restriction of the regular local Gibbs specification $(\gamma_\L)_{\L \subset\subset V}$ to the {\em naive outer 
sigma-algebra} which is generated by the height-shift invariant events that only depend on the increments (or the gradient) outside of $\L$.  
If we want to define gradient specifications in an analogous way, 
there is one peculiarity for tree graphs which must be treated properly for the theory to work: 
For trees the naive outer sigma-algebra must be replaced 
by  a strictly finer outer sigma-algebra which also 
retains the relative height information on the boundary of $\L$ together 
with the gradient configuration outside of $\L$. 
Indeed, since trees do not possess any loops the height differences between vertices outside finite sets $\L$ can in general not be recovered by the increments outside $\L$ alone (as it 
is the case for lattices) and the two sigma-algebras are different. 

In Chapter 3 we define measures on the space of gradient configurations via $q$-periodic boundary laws by pinning a class label $s \in \Z_q$ at some vertex. The existence of these measures is proven by showing applicability of Kolmogorov's extension theorem, where it turns out that the definition of a boundary law is tailor-made to guarantee the necessary consistency condition. We then show that these measures, which we call \textit{pinned gradient measures}, have a useful representation using transition matrices with 
an {\em additional  internal parameter}.   
This representation is analogous to but much more general than 
in the case of a finite state space, due to the added internal degree of freedom 
provided by the layer variable. 

In Chapter 4 we give a construction of 
gradient Gibbs measures by mixing the pinned gradient measures over the mod-$q$ fuzzy classes 
to recover both, the full gradient property and tree-homogeneity. 
Theorem \ref{MainThm} describes how this is done. 
The mixing measure $\a$ (appearing in the outmost integral) 
is the invariant measure of the $q$-state Markov chain 
which is naturally associated via a mod-$q$ fuzzy transform of the model. 

In Chapter 5 we give examples of GGMs that are constructed via $q$-periodic boundary laws.  
This involves a discussion on the relation between 
gradient models and the Potts and Ising model,  
exhibiting rich classes of GGMs 
with phase transitions. 
Furthermore it is shown how the associated $q$-state fuzzy Markov chain governs the correlation decay. 

%


\subsection{Comments and relation to work on preservation or loss of the Gibbs property under transformations}

Let us add some words comparing our present work (where we use the specific mod-$q$ fuzzy transformation to fuzzy spins
as a tool to {\em construct} a GGM on gradient variables) to related but different work where the behavior of Gibbs measures 
under different fuzzy transformations (or local coarse-grainings) was investigated.  
The integer-valued SOS model (on the lattice) was investigated by van Enter and Shlosman in \cite{vES98} under the transformation
which mapped a local spin to the sign field. In this situation non-Gibbsian measures were found, but with measure 
zero discontinuity points. 
The fuzzy Potts model on the tree 
was investigated in \cite{HK04} and again non-Gibbsian measures 
were proved to occur when the starting Gibbs measure was not the free Gibbs measure. 
In \cite{KuRo16} on the other hand, a fuzzy transformation to an Ising model which was 
adapted to the structure of the Gibbs measures was proved to be a Gibbs measure again. 

Summarizing, our present result has to be seen as a variation on the theme {\em Gibbs goes to Gibbs}. 
As important structural novelty note that the coarse-grained variables don't appear as a direct image 
of the gradient variables, but they are related via a coupling measure (see Appendix 6.1).  
The construction of a tree-invariant {\em gradient Gibbs measure} by mixing
pinned measures relies heavily on their relation given in terms of boundary laws.  
To appreciate this better 
we invite the reader also to consult Appendix 6.2 for a one-dimensional 
example of a {\em non-Gibbsian} gradient measure appearing 
by mixing of pinned measures not related via boundary laws.  

\section{The set-up and definitions}

Let $T=(V,E)$ be a locally finite connected tree with vertex set $V$ and edge set $E$. An unoriented bond $b\in E$ between two vertices $x,y \in V$ is denoted by $b = \{ x,y \}$. For the oriented edge going from $x$ to $y$ we write $\langle x,y \rangle$ and we call the set of all oriented edges $\vec E$. For a subset $A \subset V$ let $E(A)$ denote all the unoriented bonds connecting vertices in $A$, i.e. $E(A) = \{ \{x,y \} \in E \mid x,y \in A \}.$
Two vertices $x,y$ are called nearest neighbors, which we denote by $x \sim y$, if there exists an edge $b= \{ x,y \} \in E$. As $T$ has no loops there is a natural graph distance:  
For all vertices $x,y \in V$, there exists a unique self-avoiding path 
$$x = x_0 \sim x_1 \sim ...\sim x_n = y$$ 
in $V$ such that $\{ x_{k-1}, x_k \} \in E$ for all $1 \leq k \leq n$ and $x_k \neq x_j$ for all $k,j \in \{ 0, 1, ...,n \}$ with $ k \neq j$. Let $d(x,y)$ be the number of bonds of this unique self-avoiding path from $x$ to $y$, i.e. $d(x,y) = n$. 
 If $\L$ is a finite subset of vertices we write $\Lambda \subset\subset V$ and define its outer boundary as
 $$\partial \Lambda := \{ x \notin \Lambda : d(x,y) = 1 \mbox{ for some } y \in \Lambda\}.$$ 

Let a random field $(\phi_x)_{x \in V}$ of integer-valued random spin variables on the measurable space $(\Omega, \mathcal{F}) = (\Z^V, \mathcal{P}(\Z)^V)$ be given in its canonical form, i.e. $\phi_x : \Z^V \to \Z$ is defined by $\phi_x(\omega) = \omega(x) =\omega_x$, the projection onto the coordinate $x\in V$. Unlike in the Ising or Potts model, for instance, the state space of this random field is unbounded. A configuration $\omega \in \Z^V$ can be interpreted as a random realization of heights, labeled by the vertices of the tree graph (see Figure \ref{fig:lift12}). 
For any sub-volume $\L \subset V$ we let $\phi_\L: \Omega \to \Z^\L$ denote the projection onto the coordinates in $\L$ and define 
$\mathcal{F}_\L =\s(\{ \phi_y \mid y\in \L \})= \mathcal{P}(\Z)^\L$ to be the sigma-algebra 
which is generated by the height variables with sites in $\L$. 

\begin{figure}
\centering
	\begin{subfigure}{1\linewidth}
	\centering
	\beginpgfgraphicnamed{liftb}
	
	\tdplotsetmaincoords{56}{12}
		\begin{tikzpicture}[scale=1.4, tdplot_main_coords]
		
		\node[dot] (x) at (-1,0,0) {$$};
		\node at +([shift={(0.3,0,0.2)}]x) {$v_2$};
		
		\node[dot] (y) at (1,0,0)  {$$}
			edge[very thick]  	(x);
		\node (v_3) at  +([shift={(-0.3,0,0.2)}]y) {$v_3$};
		
		\node[dot] (w) at +([shift={(60:2)}]y) {$$}
			edge  		(y);
		\node[dot] (w1) at +([shift={(90:1)}]w) {$$}
			edge  		(w);
		\node[dot] (w2) at +([shift={(30:1)}]w) {$$}
			edge  		(w);	
		
		\node[dot] (b) at +([shift={(300:2)}]y) {$$}
			edge  	[very thick]	(y);
		\node at +([shift={(0.2,0,-0.4)}]b) {$v_4$};
		\node[dot] (b1) at +([shift={(270:1)}]b) {$$}
			edge  		(b);
		\node[dot] (b2) at +([shift={(330:1)}]b) {$$}
			edge  	[very thick]	(b);	
		\node at +([shift={(0.3,0,0)}]b2) {$v_5$};
		
		\node[dot] (a) at +([shift={(120:2)}]x) {$$}
			edge  	[very thick]	(x);
		\node at +([shift={(0.29,0,0.12)}]a) {$v_1$};
		\node[dot] (a1) at +([shift={(90:1)}]a) {$$}
			edge  		(a);
		\node[dot] (a2) at +([shift={(150:1)}]a) {$$}
			edge  [very thick]		(a);	
		\node at +([shift={(-0.3,0,0)}]a2) {$v_0$};
			
		\node[dot] (u) at +([shift={(240:2)}]x) {$$}
			edge  		(x);
		\node[dot] (u1) at +([shift={(210:1)}]u) {$$}
			edge  		(u);
			
		\node[dot] (u2) at +([shift={(270:1)}]u) {$$}
			edge  		(u);	
		
		\draw[thin, dashed, gray] (a2) -- +(0,0,3);
		\draw[thin, dashed, gray] (a2) -- +(0,0,-3);

		\draw[thin, dashed, gray] (a) -- +(0,0,3);
		\draw[thin, dashed, gray] (a) -- +(0,0,-3);
		
		\draw[thin, dashed, gray] (x) -- +(0,0,3);
		\draw[thin, dashed, gray] (x) -- +(0,0,-3);
		
		\draw[thin, dashed, gray] (y) -- +(0,0,3);
		\draw[thin, dashed, gray] (y) -- +(0,0,-3);
		
		\draw[thin, dashed, gray] (b) -- +(0,0,3);
		\draw[thin, dashed, gray] (b) -- +(0,0,-3);
		
		\draw[thin, dashed, gray] (b2) -- +(0,0,3);
		\draw[thin, dashed, gray] (b2) -- +(0,0,-3);
		
		\node[rec] (e1) at +([shift={(0,0,1)}]a2) {$$};
		\node[rec] (e2) at +([shift={(0,0,-2)}]a) {$$};
		\node[rec] (e3) at +([shift={(0,0,2)}]x) {$$};
		\node[rec] (e4) at +([shift={(0,0,-1)}]y) {$$};
		\node[rec] (e5) at +([shift={(0,0,2)}]b) {$$};
		\node[rec] (e6) at +([shift={(0,0,-2)}]b2) {$$};
		
		\foreach \from/\to in {e1/e2, e2/e3, e3/e4, e4/e5, e5/e6}
		\draw[dotted, thick] (\from) -- (\to);
		
		\node at +([shift={(0,0,-3.3)}]x) {$\Z$};
		
		\end{tikzpicture}
	\endpgfgraphicnamed
	\end{subfigure}
	\begin{subfigure}{1\linewidth}
	\centering%
	\beginpgfgraphicnamed{lifta}
	
	\tdplotsetmaincoords{90}{12}
		\begin{tikzpicture}[scale=1.4, tdplot_main_coords]
		
		\node[dot] (x) at (-1,0,0) {$$};
		\node (alx) at +([shift={(0.3,0,-0.3)}]x) {$v_2$};
		
		\node[dot] (y) at (1,0,0)  {$$}
			edge[very thick]  	(x);
		\node (aly) at +([shift={(-0.2,0,-0.3)}]y) {$v_3$};
		
		\node (w) at +([shift={(60:2)}]y) {$$}
			edge  		(y);
		\node (w1) at +([shift={(90:1)}]w) {$$}
			edge  		(w);
		\node[dot] (w2) at +([shift={(30:1)}]w) {$$}
			edge  		(w);	
		
		\node[dot] (b) at +([shift={(300:2)}]y) {$$}
			edge  	[very thick]	(y);
		\node (alb) at +([shift={(0.2,0,-0.3)}]b) {$v_4$};
		\node (b1) at +([shift={(270:1)}]b) {$$}
			edge  		(b);
		\node[dot] (b2) at +([shift={(330:1)}]b) {$$}
			edge  	[very thick]	(b);	
		\node at +([shift={(0.3,0,-0.3)}]b2) {$v_5$};
		
		\node[dot] (a) at +([shift={(120:2)}]x) {$$}
			edge  	[very thick]	(x);
		\node (al) at +([shift={(-0.2,0,-0.3)}]a) {$v_1$};
		
		\node (a1) at +([shift={(90:1)}]a) {$$}
			edge  		(a);
		\node[dot] (a2) at +([shift={(150:1)}]a) {$$}
			edge  [very thick]		(a);	 
		\node at +([shift={(-0.3,0,-0.3)}]a2) {$v_0$};
			
		\node (u) at +([shift={(240:2)}]x) {$$}
			edge  		(x);
		\node[dot] (u1) at +([shift={(210:1)}]u) {$$}
			edge  		(u);
			
		\node (u2) at +([shift={(270:1)}]u) {$$}
			edge  		(u);	
		
		\draw[thin, dashed, gray] (a2) -- +(0,0,3);
		\draw[thin, dashed, gray] (a2) -- +(0,0,-3);

		\draw[thin, dashed, gray] (a) -- +(0,0,3);
		\draw[thin, dashed, gray] (a) -- +(0,0,-3);
		
		\draw[thin, dashed, gray] (x) -- +(0,0,3);
		\draw[thin, dashed, gray] (x) -- +(0,0,-3);
		
		\draw[thin, dashed, gray] (y) -- +(0,0,3);
		\draw[thin, dashed, gray] (y) -- +(0,0,-3);
		
		\draw[thin, dashed, gray] (b) -- +(0,0,3);
		\draw[thin, dashed, gray] (b) -- +(0,0,-3);
		
		\draw[thin, dashed, gray] (b2) -- +(0,0,3);
		\draw[thin, dashed, gray] (b2) -- +(0,0,-3);
		
		\node[rec] (e1) at +([shift={(0,0,1)}]a2) {$$};
		\node[rec] (e2) at +([shift={(0,0,-2)}]a) {$$};
		\node[rec] (e3) at +([shift={(0,0,2)}]x) {$$};
		\node[rec] (e4) at +([shift={(0,0,-1)}]y) {$$};
		\node[rec] (e5) at +([shift={(0,0,2)}]b) {$$};
		\node[rec] (e6) at +([shift={(0,0,-2)}]b2) {$$};
		
		\foreach \from/\to in {e1/e2, e2/e3, e3/e4, e4/e5, e5/e6}
		\draw[dotted, thick] (\from) -- (\to);
		
		\node at +([shift={(0,0,-3.3)}]x) {$\Z$};
		
		\end{tikzpicture}
	\endpgfgraphicnamed

	\end{subfigure}
	
	\caption{An example of a random height field over the vertices along the path $\{ v_0, v_1,...,v_5\} \subset V$.}
	\label{fig:lift12}
\end{figure}
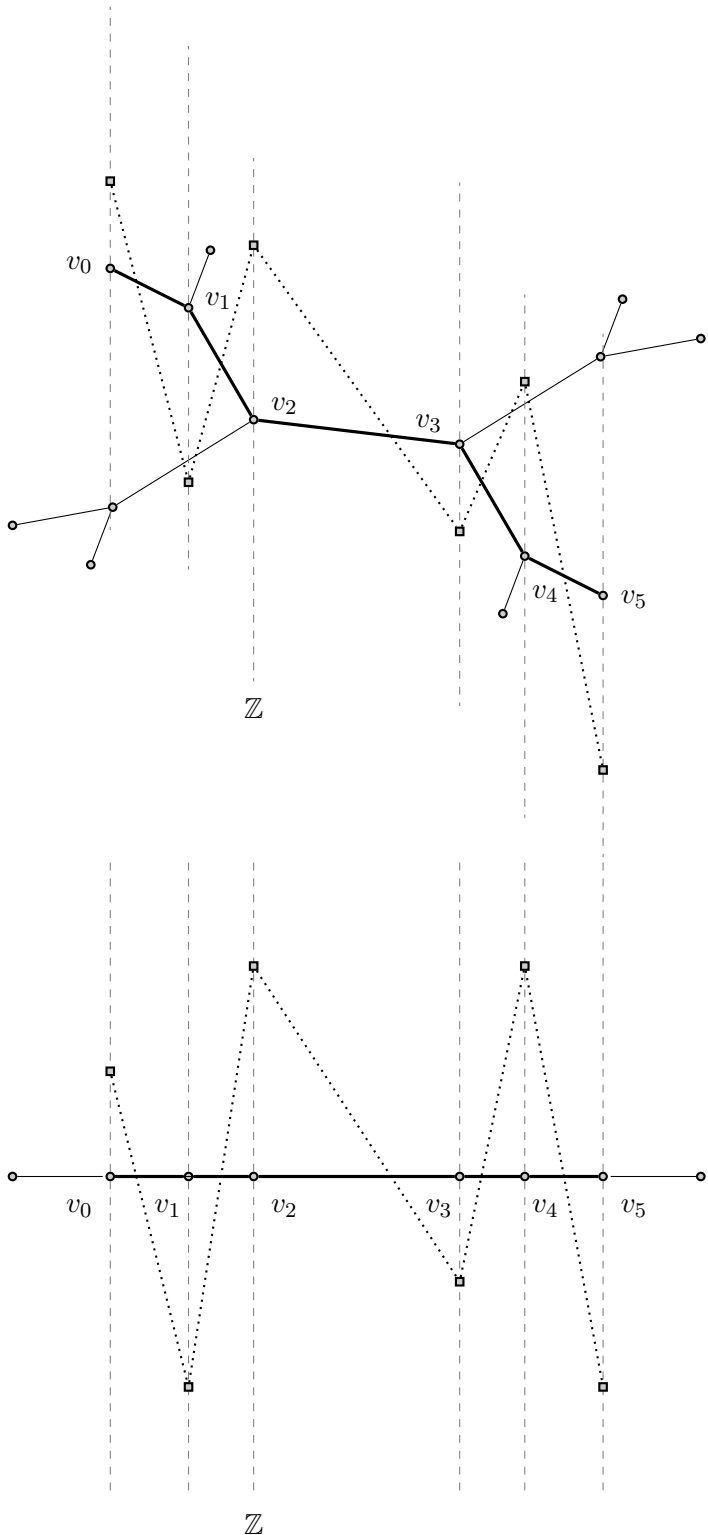

For $\omega = (\omega(x))_{x \in V}$ and $b = \langle v,w \rangle \in \vec E$ the \textit{height difference} along the edge $b$ is given by $\nabla \omega(b) = \omega_w - \omega_v$ and we also call $\nabla \omega$ the \textit{gradient field} of $\omega$. The gradient spin variables are now defined by $\eta_{\langle x,y \rangle} = \sigma_y - \sigma_x$ for each $\langle x,y \rangle \in \vec E$, and we define the projection mappings similarly as before for the height variables. Let us denote the state space of the gradient configurations by $\O^\nabla=\Z^{V}/ \Z = \Z^{\vec E}$ which becomes a measurable space with the sigma-algebra $\mathcal{F}^\nabla = \s(\{ \eta_b \mid b\in \vec E \}) = \mathcal{P}(\Z)^{\vec E}$. This is the space of all the possible gradient fields that can be prescribed by some height configuration $\omega \in \Z^V$, and trivially every gradient field $\zeta \in \Omega^\nabla$ gives a height configuration $\omega^{\zeta, \omega_x}$ for a fixed value of $\omega_x, x \in V$ by 
\begin{equation}
\omega^{\zeta, \omega_x}_y = \omega_x + \sum_{b \in \Gamma(x, y)} \zeta_b,  
\end{equation} 
where $\Gamma(x,y)$ is the unique self-avoiding path from $x$ to $y$. We note that due to the absence of loops, there is no plaquette condition, which is known for lattices to hold. 

Let some symmetric nearest-neighbor \textit{gradient interaction potential} $U_{b}: \Z \to \R$ be given for every $b=\{x,y\} \in E$, i.e. 
$$U_{b}(m) = U_{b}(-m)$$ for all $m\in \Z$.

To shorten our notation we sometimes write $\omega_b = \{ \omega_x, \omega_y \} \in \Z^2$ for edges $b = \{ x,y \} \in E$. Note that for all finite $\L \subset\subset V$ and any $\omega \in \Omega$, the quantity
\begin{equation}\begin{split}
H_\Lambda^U(\omega)  =  \sum_{b \cap \Lambda \neq \emptyset} U_b(\nabla \omega_b) 
\end{split}\end{equation}
exists and is finite. 
$H_\Lambda^U$ is called the \textit{Hamiltonian} in the finite volume $\L$ for $U$. 

\begin{defn}
The {\em local Gibbsian specification} corresponding to the Hamiltonian $H^U$ is defined as the family of probability kernels $(\gamma_\L)_{\L\subset\subset V}$ from $(\Omega, \mathcal{F}_{\L^c})$ to $(\Omega, \mathcal{F})$ by
\begin{equation}\begin{split}\label{spec}
\gamma_\Lambda (A\mid \tilde \omega) &= Z_\Lambda^{-1}(\tilde \omega) \int_A \exp\left( - \sum_{b\subset\L}U_b(\nabla \omega_b)  
- \sum_{i \in \L, j \in \L^c : i \sim j} U_{\{i,j\}} (\omega_i - \tilde \omega_j )\right) d\omega_\L, \\
\end{split}\end{equation}
where $Z_\L(\tilde \omega)$ denotes a normalization constant (or {\em partition function}) that turns the last expression into a probability measure for $\tilde \omega \in \Omega$.
\end{defn}
We define a family of functions $( Q_b )_{b\in E}$ with $Q_b : \mathbb{Z}_0 \to (0, \infty)$ s.t.
\begin{equation}\label{Qpotential}
Q_b (m)= \exp \left(- U_b(m)\right)
\end{equation}
for all $m\in \Z$. This family plays the role of {\em transfer operators}. 
Hence the Gibbsian specification admits the representation  
\begin{equation}\label{eq:specQ}
\gamma_\Lambda (A\mid \tilde \omega)  = Z_\Lambda^{-1}(\tilde \omega) \int_A  \prod_{b\subset\L} Q_b(\omega_b)  \prod_{x \in \L, y \in \L^c : x \sim y} Q_{xy} (\omega_x - \tilde \omega_y)  \; d\omega_\L.
\end{equation}
To have the partition functions finite, we assume throughout this paper that $Q_b \in l^1( \Z)$ for every $b \in E$. 

The reader may think of the concrete examples of the form $U(m)=\beta |m|^\a$ with $\a$ and $\b$ being positive constants, where the most popular cases 
are the SOS model obtained for $\a=1$ \cite{RS06}, and the so-called 
discrete Gaussian obtained for $\a=2$ \cite{S07}.

Note that the Hamiltonian $H$ changes only by a configuration-independent constant under the joint height-shift $\phi_x (\omega) \to \phi_x(\omega) + c$ of all spin variables $\phi_x(\omega), x \in V$ for the same constant $c\in\R$, which holds true for any fixed configuration $\omega \in \Z^V$. Using this invariance under the height shifts we can lift the probability kernels $\gamma_\L$ to kernels $\gamma'_\L$ on gradient configurations, as we will explain later on. 
First a warning is in order: On tree graphs the gradient specifications differ from those on lattices in more than one dimension since the complement of any finite set $\L \subset\subset V$ is disconnected (see Figure \ref{fig:grid}). Therefore the knowledge of a gradient configuration along the edges connecting vertices outside of $\L$ is not sufficient to reconstruct a boundary condition modulo overall height-shift. 

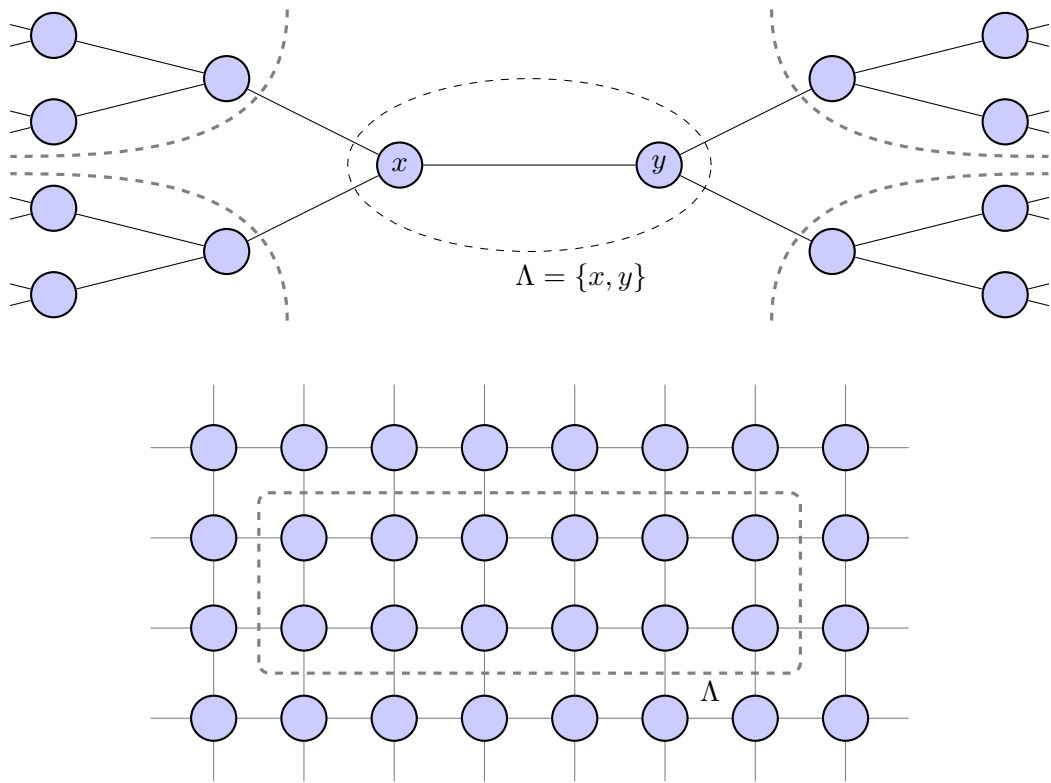
\begin{figure}
\centering
\begin{subfigure}{1\linewidth}
\centering
	\beginpgfgraphicnamed{tree-nature}
		\begin{tikzpicture}[scale=1.15]
		\clip (-9.5,-1.5) rectangle (2.5,3.5);
		
		\node (a11) at (-11,3) {};
		\node (a12) at (-11,2) {};
		
		\node (a21) at (-11,2) {};
		\node (a22) at (-11,1) {};
		
		\node (a31) at (-11,1) {};
		\node (a32) at (-11,0) {};
		
		\node (a41) at (-11,0) {};
		\node (a42) at (-11,-1) {};

		\node (b11) at (4,3) {};
		\node (b12) at (4,2) {};
		
		\node (b21) at (4,2) {};
		\node (b22) at (4,1) {};
		
		\node (b31) at (4,1) {};
		\node (b32) at (4,0) {};
		
		\node (b41) at (4,0) {};
		\node (b42) at (4,-1) {};
		
		\node[place] (w1) at (-9,2.5) {}
			edge (a11)
			edge (a12);
			
		\node[place] (w2) at (-9,1.5) {}
			edge (a21)
			edge (a22);
		
		\node[place] (x1) at (-9,0.5) {}
			edge (a31)
			edge (a32);	
	
		\node[place] (x2) at (-9,-0.5) {}
			edge (a41)
			edge (a42);
		
		\node[place] (u1) at (2,2.5) {}
			edge (b11)
			edge (b12);
			
		\node[place] (u2) at (2,1.5) {}
			edge (b21)
			edge (b22);
		
		\node[place] (v1) at (2,0.5) {}
			edge (b31)
			edge (b32);
		
		\node[place] (v2) at (2,-0.5) {}
			edge (b41)
			edge (b42);
		
		\node[place] (k1) at (0,2) {}
			edge 		(u1)
			edge 			(u2);
			
		\node[place] (k2) at (0,0)  {}
			edge  		(v1)
			edge 			(v2);
		
		\node[place] (m1) at (-7,2) {}
			edge  		(w1)
			edge 			(w2);
			
		\node[place] (m2) at (-7,0)  {}
			edge  		(x1)
			edge 			(x2);
			
		\node[place] (j) at (-5,1)  {$x$}
			edge  		(m1)
			edge 			(m2);
		
		\node[place] (i) at (-2,1)  {$y$}
			edge  		(k1)
			edge 			(k2)
			edge 		(j);
			
		\node at (-2.9,-0.3) {$\L=\{x,y\}$};
			
		\draw[dashed] (-3.5,1) ellipse (2.1cm and 1cm);
		
		  \draw[very thick, gray, dashed] 
    			(-0.7,2.8).. controls +(down:1.6cm) and +(left:1.5cm) ..(2.5,1.1);
			
 		  \draw[very thick, gray, dashed] 
    			(-0.7,-0.8).. controls +(up:1.6cm) and +(left:1.5cm) ..(2.5,0.9);
		
		  \draw[very thick, gray, dashed] 
    			(-6.3,-0.8).. controls +(up:1.6cm) and +(right:1.5cm) ..(-9.5,0.9);
			
		 \draw[very thick, gray, dashed] 
    			(-6.3,2.8).. controls +(down:1.6cm) and +(right:1.5cm) ..(-9.5,1.1);
		
		\end{tikzpicture}
	\endpgfgraphicnamed
	
\end{subfigure}

\begin{subfigure}{1\linewidth}
\centering
	\beginpgfgraphicnamed{grid}
		\begin{tikzpicture}[scale=1.2]
			\draw[step=1cm,color=gray] (-5.7,-2.7) grid (2.7,1.7);
			
			\draw[very thick, gray, dashed, rounded corners] (-4.5,-1.5) rectangle (1.5,0.5); 
				
			\foreach \x in {-5,-4,-3,-2,-1,0,1,2} {
				\foreach \y in {-2,-1,0,1} 
					\node[place] at (\x,\y) {};
			};
			
			\node at (0.5,-1.7) {$\L$};
			
		\end{tikzpicture}
	\endpgfgraphicnamed
	
\end{subfigure}

\caption{For the binary tree the complement of the finite set $\L\subset \subset V$ consists of four infinite disconnected subtrees. Hence the relative boundary height is not recovered by the gradient information among edges contained in these infinite subtrees. The complement of the finite set $\L\subset \subset \Z^2$ consists however of exactly one connected subgraph.}

\label{fig:grid}
\end{figure}

\begin{defn}
Let $\L \subset V$. For any two infinite-volume gradient configurations $\rho, \zeta \in \Omega^\nabla$ we write $\rho \sim_{\partial \L} \zeta$, if there exist two height configurations $\varphi, \psi \in \Omega$ s.t. $\nabla \varphi = \rho$, $\nabla \psi = \zeta$ and $\varphi_{\partial \L} = \psi_{\partial \L}$. 
\end{defn}

We note that $\rho \sim_{\partial \L} \zeta$ if and only if 
$$\sum_{b \in \Gamma(x,y)} \rho_b = \sum_{b \in \Gamma(x,y)} \zeta_b$$
for any vertices $x,y \in \partial \L$, where $\Gamma(x,y) \subset \vec E$ denotes the unique self-avoiding path from $x$ to $y$.
Suppose that $\L$ is any fixed 
finite subtree. Then the above property 
can be decided when we know both gradient configurations $\eta,\zeta$ 
for the edges with sites on $\L \cup \partial \L$, that is on the inside. 
Indeed, this allows us to reconstruct 
 the relative heights at the sites on $\partial \L$ in both configurations. 

We define a map $\zeta \mapsto [\zeta_{\partial \L}]$ with values in these 
equivalence classes, that is taking values in $\Z^{\partial \L}/ \Z$. 
Observe that 
this map is then measurable w.r.t. to the sigma-algebra which is generated by the gradient variables inside $\L\cup\partial\L$, i.e. $\s( (\eta_b)_{b \subset (\L \cup \partial \L)})$.  
However it is not measurable w.r.t. the naive outer 
sigma-algebra 
$\mathcal{F}^\nabla_{\L^c}$, 
where we put 
$\mathcal{F}^\nabla_{W^c}:=\s( (\eta_b)_{b \cap W= \emptyset})$
for any (possibly infinite) subset $W\subset V$ . 
This would only be the case for any graphs with the property 
that after subtraction of any finite subvolume the graph is still connected, like it is the case for lattices in more than one dimension (see Figure \ref{fig:grid}). 

Therefore, for the desired kernels $\gamma'_{\L}$ on the gradient space
the sigma-algebras to be considered need to keep this information, 
and be larger than just the product sigma-algebras over the gradient variables 
with bonds in the outside. 

\begin{defn}\label{outside} 
Let $\L\subset\subset V$ be any finite subvolume. Then the {\em gradient sigma-algebra 
outside $\L$} is defined to be  
$$\mathcal{T}^\nabla_{\L}=\s( (\eta_b)_{b \cap \L = \emptyset}, \left[\eta_{\partial \L}\right]) \subset \mathcal{F^\nabla}$$
\end{defn}

\begin{rem} 
We note that  $\L_2 \supset \L_1$ implies $\mathcal{T}_{\L_2}\subset \mathcal{T}_{\L_1}$.  
This is true since the information from the boundary condition in the smaller volume $\L_1$ 
supplemented with information from the annulus $\L_2 \backslash \L_1$ 
allows to recover the 
information of the boundary condition in the larger volume. 
\end{rem}

\begin{rem} 
Observe that $\mathcal{F}^\nabla_{\L^c}$ is strictly smaller than 
$\mathcal{T}^\nabla_{\L}$. This is quite unusual compared 
to the Gibbsian setup for lattice spin systems, and gradient systems (in two or more dimensions). 
\end{rem}

Due to the tree nature, the usual plaquette condition for gradient configurations (saying 
that walking around circles on the graph 
we arrive at the same initial height) is empty for configurations 
inside a given volume. 
However, this does not make our specification trivial, as the 
relative-height constraint remains on the boundary of the volume. 

The gradient specification we define will be the restriction of the previously 
defined Gibbs specification to the smaller sigma-algebra $\FF^{\nabla}\subset \FF$.  
Let $B\in \FF^{\nabla}$ a height-shift invariant set. 
Note that the Gibbsian probability 
$\gamma_{\L}(B \mid \cdot)$ is measurable w.r.t. to $\mathcal{T}_{\L}$, but not w.r.t. the smaller 
sigma-algebra generated by the gradient variables outside of $\L$ which does not 
contain the relative-height information on the boundary.

\begin{defn}
The {\em gradient Gibbs specification} is defined as the family of probability kernels $(\gamma'_\L)_{\L \subset \subset V}$ from $(\Omega^\nabla, \mathcal{T}_\L)$ to $(\Omega^\nabla, \mathcal{F}^\nabla)$ such that
\begin{equation}\begin{split}\label{grad}
\int F(\rho) \gamma'_\L(d\rho \mid \zeta) = \int F(\nabla \varphi) \gamma_\L(d\varphi\mid\omega)
\end{split}
\end{equation}
for all bounded $\FF^{\nabla}$-measurable functions $F$, where $\omega \in \Omega$ is any height-configuration with $\nabla \omega = \zeta$.
\end{defn}


A more explicit writing goes like this, using the tree property.   
Let $\a^Q_{\L\cup\partial\L}$ denote the product specification on the bonds $b$ inside $\L\cup\partial\L$
given by the transfer operator $Q_b(\cdot)$, i.e.
\begin{equation}
\a_{\L\cup\partial\L}^Q(\rho_{\L\cup\partial\L}) = Z_\L^{-1}(\rho) \prod_{b\cap\L\neq\emptyset} Q_b(\rho_b) 
\end{equation}
where $Z_\L(\rho)$ is a normalizing constant. 
Then the l.h.s. of \eqref{grad} is given by
\begin{equation}\begin{split}
\gamma'_{\L}(F \mid \zeta)=\frac{
\sum_{\rho_{\L\cup \partial \L}}\a^Q_{\L\cup \partial \L}(
\rho_{\L\cup \partial \L}) F(\rho_{\L\cup \partial \L} \zeta_{\L^c})
\mathbf{1}_{[\rho_{\partial\L}]=[\zeta_{\partial\L}]}}
{
\sum_{\rho_{\L\cup \partial \L}}\a^Q_{\L\cup \partial \L}(
\rho_{\L\cup \partial \L})
\mathbf{1}_{[\rho_{\partial\L}]=[\zeta_{\partial\L}]}
}.
\end{split}
\end{equation}
In the concatenation $\eta_{\L\cup \partial \L} \zeta_{\L^c}$ 
the subscripts denote that gradient configurations should be taken 
with both endpoints of edges on the indicated sets of sites. 

Using the outer sigma-algebra 
 $\mathcal{T}_{\L}$, this is now a proper and consistent family of probability kernels, i.e. 
\begin{equation}
\gamma'_\L(A \mid \zeta) = \mathbf{1}_A(\zeta)
\end{equation} 
for every $A \in \mathcal{T}_\L$ and $\gamma'_\Delta \gamma'_\L = \gamma'_\Delta$ for any finite volumes $\L, \Delta \subset V$ with $\L \subset \Delta$. The proof is similar to the situation of regular local Gibbs specifications \cite[Proposition 2.5]{Ge88}. 

Let $\mathcal{C}_b(\Omega^\nabla)$ be the set of bounded functions on $\Omega^\nabla$. 
Gradient Gibbs measures will now be defined in the usual way by having its conditional probabilities outside finite regions prescribed by the gradient Gibbs specification:

\begin{defn} A measure $\nu \in \mathcal{M}_1(\O^\nabla)$ is called a {\em gradient Gibbs measure (GGM)} if it satisfies the DLR equation 
\begin{equation}\label{DLR}
\int \nu (d\zeta)F(\zeta)=\int \nu (d\zeta) \int \gamma'_{\L}(d\tilde\zeta \mid \zeta) F(\tilde\zeta)
\end{equation}
for every finite $\Lambda \subset V$ and for all $F \in \mathcal{C}_b(\Omega^\nabla)$. The set of gradient Gibbs measures will be denoted by $\GG^\nabla(\g)$ or $\GG^\nabla(Q)$.  
\end{defn}

The advantage of gradient Gibbs measures is that they may exist, even in situations where a proper Gibbs measure does not. An example for this is the massless discrete Gaussian free field on the lattice $\Z^d$ in dimensions $d\leq 2$. Let $B(n)$ be the box of width $n$ and $\psi$ any boundary condition along its boundary. Then it can be shown that the variance of the height variable at $0$ under the finite-volume Gibbs measure $\gamma_{B(n)}(\cdot \mid \psi)$ goes to $\infty$ for $n\to\infty$. The field is said to \textit{delocalize} \cite{Ve06}. 

In the following we will work towards a representation of gradient Gibbs measures via the notion of so-called boundary laws \cite{Cox77}, \cite{Ge88}, \cite{Z83}:

\begin{defn}\label{def:bl}
A family of vectors $\{ l_{xy} \}_{\langle x,y \rangle \in \vec E}$ with $l_{xy} \in (0, \infty)^\Z$ is called a {\em boundary law for the transfer operators $\{ Q_b\}_{b \in E}$} if for each $\langle x,y \rangle \in \vec E$ there exists a constant  $c_{xy}>0$ such that the consistency equation
\begin{equation}\label{eq:bl}
l_{xy}(\omega_x) = c_{xy} \prod_{z \in \partial x \setminus \{y \}} \sum_{\psi_z \in \Z} Q_{zx}(\omega_x-\psi_z) l_{zx}(\psi_z)
\end{equation}
holds for every $\omega_x \in \Z$. A boundary law is called to be {\em $q$-periodic} if $l_{xy} (\omega_x + q) = l_{xy}(\omega_x)$ for every oriented edge $\langle x,y \rangle \in \vec E$ and each $\omega_x \in \Z$. 

\end{defn}

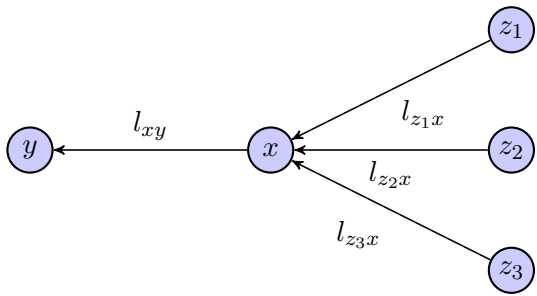
\begin{figure}
	\beginpgfgraphicnamed{boundary-law}
		\begin{tikzpicture}[scale=1.6]
		\node[place] (z1) at (0,2) {$z_1$};
		\node[place] (z2) at (0,1)  {$z_2$};
		\node[place] (z3) at (0,0)  {$z_3$};
		\node[place] (y) at (-4,1)  {$y$};
	
		\node[place] (x) at (-2,1)  {$x$}
			edge [pre] 	node[auto,swap] {$l_{z_1 x}$}	(z1)
			edge [pre] 	node[auto,swap] {$l_{z_2 x}$}	(z2)
			edge [pre]		node[auto,swap] {$l_{z_3 x}$}	(z3)
			edge [post]	node[auto,swap] {$l_{xy}$}	(y);
		\end{tikzpicture}
	\endpgfgraphicnamed

	\caption{For the boundary law $l$ the value of $l_{xy}(\omega_x)$ along the oriented edge $\langle x,y \rangle$ can be recursively determined by the values of $\{ l_{z_i x} \}_{i=1,2,3}$ along the oriented edges pointing towards $x$ via equation \eqref{eq:bl}.}
	\label{fig:bl}
\end{figure}

For periodic boundary laws all appearing sums are finite, under our assumption $Q\in l^1(\Z)$.
Note that while the transfer operators $\{ Q_b \}_{b \in E}$ possess reflection symmetry in spin space, i.e. $Q_b(\omega_b) = Q_b(-\omega_b)$, this is not necessarily the case for the class of boundary laws.

\section{Construction of gradient measures via periodic boundary laws}

We want to remind the reader of the definition of a tree-indexed Markov chain for tree-indexed
Gibbs measures. 
 To formulate this we need some more notation. For any vertex $w\in V$ the set of the directed edges pointing away from $w$ is given by
$$\vec E_w = \{ \langle x, y \rangle \in E : d(w,y) = d(w,x)+1 \}.$$
This set can be interpreted as the ''future'' of the vertex $w$. 
Furthermore we define the ''past'' of any oriented edge $\langle x,y \rangle \in \vec E$ by
$$(-\infty, xy) = \{ w \in V \mid \langle x,y \rangle \in \vec E_w \}.$$

\begin{figure}
	\beginpgfgraphicnamed{pastedge}
		\begin{tikzpicture}[scale=1.]
		
		\clip (-10.5,-1) rectangle (-1,3);
		
		\node[place] (a1) at (-11,3) {};
		\node[place] (a2) at (-11,2) {};
		
		\node[place] (b1) at (-11,2) {};
		\node[place] (b2) at (-11,1) {};
		
		\node[place] (c1) at (-11,1) {};
		\node[place] (c2) at (-11,0) {};
		
		\node[place] (d1) at (-11,0) {};
		\node[place] (d2) at (-11,-1) {};
		
		\node[place] (w1) at (-9,2.5) {}
			edge [pre] 		(a1)
			edge [pre] 		(a2);
			
		\node[place] (w2) at (-9,1.5) {}
			edge [pre] 		(b1)
			edge [pre] 		(b2);
		
		\node[place] (x1) at (-9,0.5) {}
			edge [pre] 		(c1)
			edge [pre] 		(c2);
		
		\node[place] (x2) at (-9,-0.5) {}
			edge [pre] 		(d1)
			edge [pre] 		(d2);

		\node[place] (m1) at (-7,2) {}
			edge [pre] 		(w1)
			edge [pre] 			(w2);
			
		\node[place] (m2) at (-7,0)  {}
			edge  		[pre] 	(x1)
			edge 		[pre] 	(x2);
			
		\node[place] (k1) at (0,2) {};
		\node[place] (k2) at (0,0) {};
			
		\node[place] (x) at (-5,1)  {$x$}
			edge  	[pre] 		(m1)
			edge 	[pre] 		(m2);
		
		\node[place] (y) at (-2,1)  {$y$}
			edge  		(k1)
			edge 		(k2)
			edge [pre]		(x);
			
		\draw[dashed] (-3.5,-1) .. controls +(right:0.3cm) and +(right:0.3cm) .. (-3.5,3.0);
		
		\end{tikzpicture}
	\endpgfgraphicnamed

	\caption{The set of oriented edges $(-\infty, xy)$.}
	\label{fig:pastedge}
\end{figure}
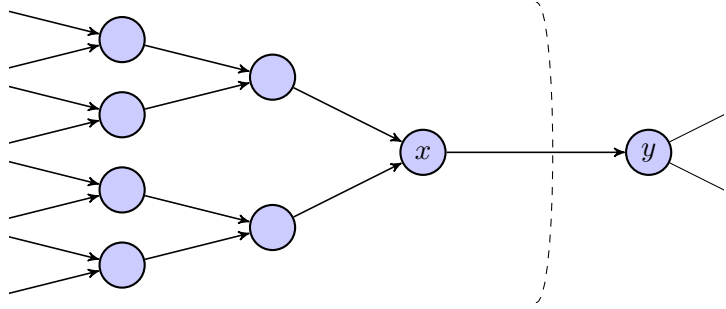

\begin{defn}
Let $\Omega_0$ be the local state space and $\Omega = \Omega_0^V$. 
A measure $\mu \in \mathcal{M}_1(\Omega)$ is called a {\em tree-indexed Markov chain} if
\begin{equation}\label{mc}
\mu(\phi_y = \omega_y \mid \mathcal{F}_{(-\infty, xy)}) = \mu(\phi_y = \omega_y \mid \mathcal{F}_{\{x\}}) 
\end{equation}
$\mu$-a.s. for any $\langle x,y \rangle \in \vec E$ and any $\omega_y \in \Omega_0$. 
\end{defn}

For finite local state spaces $\Omega_0$ it is well known that every Gibbs measure w.r.t. a specification of the form \eqref{spec} which is a Markov chain corresponds to a boundary law, which is unique up to a positive pre-factor \cite[Theorem 12.12]{Ge88}. Conversely, every boundary law $\{ l_{xy} \}_{\langle x,y \rangle \in \vec E}$ with $l_{\langle x,y \rangle}\in(0,\infty)^{\Omega_0}$ for each $\langle x,y \rangle \in \vec E$ defines a unique Markov chain $\mu \in \mathcal{M}_1(\O)$ in the set of Gibbs measures via the equation 
\begin{equation}\label{GeBL}
\mu(\phi_{\L\cup\partial\L} = \omega_{\L\cup\partial\L}) = \bar Z_\L^{-1} \prod_{y\in\partial\L} l_{yy_\L}(\omega_y) \prod_{b\cap\L\neq\emptyset} Q_b(\omega_b).
\end{equation}
Here $\bar Z_\L$ is a normalizing constant and $\L\subset V$ is any finite connected set. Note that if $\Lambda \subset V$ is a connected set and $y \in \partial \L$, then $\L \cap \partial y$ consists of a unique element which is denoted here by $y_\L$. 

A similar result can be obtained under the additional assumption that the boundary law is normalizable \cite[Theorem 3.2]{Z83}, i.e. 
\begin{equation}\label{eq:norm}
\sum_{\omega_x \in \Z} \left( \prod_{z \in \partial x} \sum_{\psi_z \in \Z} Q_{zx} (\omega_x - \psi_z) l_{zx} (\psi_z) \right) < \infty
\end{equation}
for each $x \in V$. Note that this condition is needed since it guarantees that $\bar Z_{\{i\}} < \infty$ for every $i \in V$ and hence by consistency \eqref{eq:bl} that $\bar Z_\L < \infty$ for every finite connected sub-volume $\L \subset V$. However, this assumption rules out many interesting cases like e.g. periodic boundary laws.


\begin{defn}
Let the {\em mod-$q$ fuzzy map} $T_q : \Z \to \Z_q$ be given by $T_q(i) = i\text{ mod } q$, where $\Z_q = \{ 0,...,q-1\}$ for $n\in\mathbb{N}$. 
\end{defn}

For any connected sub-volume $A\subset V$ let $\O^\nabla_A$ denote the set of gradient configurations on $A$, i.e. $\O^\nabla_A = \Z^{\vec E(A)}$, where $\vec E(A)$ are the directed edges connecting the vertices in $A$.
Now we define gradient measures in the infinite volume which are associated 
to a boundary law by pinning the spin at a given site $w\in V$ to take values in a given class (or layer).
More precisely we have the following theorem. 

We define a family of marginal measures in some analogy to the boundary law 
representation \eqref{GeBL} of \cite[Theorem 12.12]{Ge88}, but supplemented with internal information 
about layers. 

\begin{thm}\label{PinGGM} 
Let a vertex $w\in\Lambda$, where $\Lambda \subset V$ is any finite connected set, and a class label $s \in \Z_q$ be given. 
Then any $q$-periodic boundary law $\{ l_{xy} \}_{\langle x,y \rangle \in \vec E}$ for $\{ Q_b \}_{b \in E}$ defines a consistent family of probability measures on the gradient space $\Omega^\nabla$ by
\begin{equation}\label{bl1}
\nu_{w,s}(\eta_{\Lambda \cup \partial \Lambda}=\zeta_{\L\cup\partial\L})
= 
c_\Lambda(w,s) \prod_{y \in \partial \Lambda} l_{yy_\L}\left(\varphi'_y(s, \zeta)\right) \prod_{b \cap \Lambda \neq \emptyset}
Q_b(\zeta_b),
\end{equation}
where $\zeta_{\L\cup\partial\L}\in \Z^{\vec E(\Lambda \cup \partial \Lambda)}$. Here 
$$\varphi'_y(s , \zeta)=T_q\Bigl (
s+\sum_{b\in \Gamma(w,y)}\zeta_b
 \Bigr)$$ denotes the class in $\Z_q$ obtained by walking from class $s$ at the site $w\in \Lambda$ 
 along the unique path $\Gamma(w,y)$ to the boundary site $y$ whose class 
 is determined by the gradient configuration $\zeta$. 
 Since the boundary law is a class function, expression \eqref{bl1} is well-defined, where
$c_\Lambda(w,s)$ is a normalization factor that turns 
$\nu_{w,s}$ into a probability measure on $\Z^{\vec E(\Lambda \cup \partial \Lambda)}$. 
\end{thm}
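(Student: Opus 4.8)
\emph{Proof proposal.} The plan is to verify the hypotheses of Kolmogorov's extension theorem over the directed edge set $\vec E$, exploiting that the sets $\vec E(\Lambda\cup\partial\Lambda)$, as $\Lambda$ ranges over the finite connected subsets containing $w$, are cofinal among the finite subsets of $\vec E$. It therefore suffices to show two things: that for each such $\Lambda$ the prescription \eqref{bl1} is a genuine probability measure on $\Z^{\vec E(\Lambda\cup\partial\Lambda)}$, and that the resulting family is projective, i.e.\ that marginalizing the measure for $\Lambda'$ onto the gradient variables of $\Lambda\cup\partial\Lambda$ returns the measure for $\Lambda$ whenever $\Lambda\subset\Lambda'$.

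For the first point I would argue that, since $\Lambda$ is connected and the tree has no loops, no bond lies entirely in $\partial\Lambda$ and every bond meeting $\Lambda$ is contained in $\Lambda\cup\partial\Lambda$; hence the gradient variables are indexed exactly by $\vec E(\Lambda\cup\partial\Lambda)$ and are subject to no plaquette constraint. As the boundary law is $q$-periodic, each factor $l_{yy_\Lambda}(\varphi'_y(s,\zeta))$ ranges over finitely many positive values and is in particular bounded, while $\prod_{b\cap\Lambda\neq\emptyset}Q_b(\zeta_b)$ is summable because $Q_b\in l^1(\Z)$ and only finitely many bonds occur. The total mass is thus finite and strictly positive, so $c_\Lambda(w,s)\in(0,\infty)$ exists. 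Well-definedness of the labels $\varphi'_y(s,\zeta)$ is already ensured by the uniqueness of the path $\Gamma(w,y)$ together with the class-function property of $l$.

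The core is the projectivity, which I would reduce to the elementary step $\Lambda'=\Lambda\cup\{v\}$ with $v\in\partial\Lambda$, a general nested pair being reached by adding boundary vertices one at a time while preserving connectedness. Writing $z_1,\dots,z_k$ for the neighbors of $v$ outside $\Lambda$, one has $\partial\Lambda'=(\partial\Lambda\setminus\{v\})\cup\{z_1,\dots,z_k\}$, and---again because there are no loops---the only new bonds and gradient variables are those along $\langle v,z_i\rangle$, so marginalizing amounts to summing \eqref{bl1} for $\Lambda'$ over $\zeta_{\langle v,z_1\rangle},\dots,\zeta_{\langle v,z_k\rangle}$. Two facts drive the computation: the path identity $\Gamma(w,z_i)=\Gamma(w,v)\cup\{\langle v,z_i\rangle\}$ gives $\varphi'_{z_i}(s,\zeta)=T_q(\varphi'_v(s,\zeta)+\zeta_{\langle v,z_i\rangle})$, while for the remaining boundary vertices $y\in\partial\Lambda\setminus\{v\}$ the data $y_{\Lambda'}=y_\Lambda$ and $\varphi'_y(s,\zeta)$ are unchanged. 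Hence the summation factorizes as
\begin{equation*}
\prod_{i=1}^{k}\ \sum_{m\in\Z}Q_{z_iv}(m)\,l_{z_iv}\bigl(T_q(\varphi'_v(s,\zeta)+m)\bigr).
\end{equation*}
Choosing any integer representative $\omega_v$ of the class $\varphi'_v(s,\zeta)$ and substituting $\psi_{z_i}=\omega_v+m$, the symmetry $Q_{z_iv}(m)=Q_{z_iv}(-m)$ and the $q$-periodicity of $l_{z_iv}$ recast each factor as $\sum_{\psi_{z_i}}Q_{z_iv}(\omega_v-\psi_{z_i})\,l_{z_iv}(\psi_{z_i})$, which is precisely the product occurring in the boundary law equation \eqref{eq:bl} for the edge $\langle v,v_\Lambda\rangle$. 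The whole product therefore collapses to $c_{vv_\Lambda}^{-1}\,l_{vv_\Lambda}(\varphi'_v(s,\zeta))$, reinstating exactly the boundary factor at $v$ that \eqref{bl1} for $\Lambda$ requires.

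What remains is bookkeeping: the marginal equals \eqref{bl1} for $\Lambda$ up to the overall constant $c_{\Lambda'}(w,s)\,c_{vv_\Lambda}^{-1}$, and since both sides are probability measures on the same space this constant must equal $1$, giving consistency on the nose. Iterating over single-vertex additions yields projectivity for all nested connected pairs, and Kolmogorov's extension theorem then delivers the measure $\nu_{w,s}$ on $(\Omega^\nabla,\mathcal F^\nabla)$. I expect the main obstacle to lie in the projectivity step, specifically in correctly propagating the class labels $\varphi'_y$ under enlargement of the volume and in re-aligning the free integer sum over the new gradient with the boundary-law sum; it is exactly here that $q$-periodicity substitutes for Zachary's normalizability condition \eqref{eq:norm}.
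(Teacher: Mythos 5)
Your proposal is correct and follows essentially the same route as the paper's proof: reduce consistency to the single-vertex enlargement $\Lambda' = \Lambda\cup\{v\}$, sum out the new gradient variables so that the $q$-periodicity of $l$ together with the symmetry of $Q$ lets the boundary-law equation \eqref{eq:bl} collapse the product to $c_{vv_\Lambda}^{-1}\,l_{vv_\Lambda}(\varphi'_v(s,\zeta))$, fix the normalization constant by summing both sides, and conclude via Kolmogorov's extension theorem. Your explicit verification that each finite-volume prescription has finite, strictly positive mass is a point the paper handles only implicitly (via the remark that $Q_b\in l^1(\Z)$ makes all sums finite for periodic boundary laws), but it is the same argument.
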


\begin{proof}
The distributions in \eqref{bl1} are consistent if

\begin{equation}\begin{split}\label{cs}
\sum_{\zeta_{A} \in A^\nabla }  c_\Delta(w,s) \prod_{y \in \partial \Delta} & l_{yy_\L} (\sigma'_y(s, \zeta)) \prod_{b \cap \Delta \neq \emptyset} Q_b(\zeta_b) \\
&= c_\Lambda(w,s) \prod_{y \in \partial \Lambda} l_{yy_\L} (\sigma'_y(s, \zeta)) \prod_{b \cap \Lambda \neq \emptyset} Q_b(\zeta_b),
\end{split}\end{equation}
whenever $\Lambda, \Delta \subset V$ are any finite connected sets with $\Lambda \subset \Delta$, $A = (\Delta \cup \partial \Delta) \setminus (\Lambda \cup \partial \Lambda)$ and $\zeta \in (\Lambda \cup \partial \Lambda)^\nabla$. 
We show that this is the case for any $\Delta := \Lambda \cup \{ z \}$, where $z \in \partial \Lambda$ (see Figure \ref{fig:proofpinGGM}). The claim then follows by induction. 

\begin{figure}
	\beginpgfgraphicnamed{proofpinGGM}
		\begin{tikzpicture}[scale=1.15]
		
		\node[place] (r1) at (3,1) {};
		\node[place] (r2) at (2,2)  {};
		
		\node[place] (s1) at (3,-1) {};
		\node[place] (s2) at (2,-2)  {};
		
		\node[place] (y1) at (-2,2) {};
		\node[place] (y2) at (-3,1)  {};
		
		\node[place] (v1) at (-3,-1) {};
		\node[place] (v2) at (-2,-2)  {};
		
		\node[place] (k1) at (2,1) {}
			edge [pre] 		(r1)
			edge [pre]			(r2);

		\node[place] (k2) at (2,-1)  {}
			edge [pre] 		(s1)
			edge [pre]			(s2);

		\node[place] (m1) at (-2,1) {}
			edge [pre] 		(y1)
			edge [pre]			(y2);

		\node[place] (m2) at (-2,-1)  {}
			edge [pre] 		(v1)
			edge [pre]			(v2);

		\node[place] (x) at (-1,0)  {}
			edge  		(m1)
			edge 		(m2);
		
		\node[place] (u1) at (6.8,1) {$u_1$};
		\node[place] (u2) at (6.8,0)  {$u_2$};
		\node[place] (u3) at (6.8,-1)  {$u_3$};
		
		\node[place] (w) at (1,0)  {$z_\L$}
			edge 		(k1)
			edge 		(k2)
			edge 		(x);
		
		\node[place] (z) at (4,0) {$z$}
			edge [pre] (u1)
			edge [pre] (u2)
			edge [pre] (u3)
			edge [post] (w);

	\draw[dashed] (6.8,0) ellipse (0.8cm and 1.8cm);
		\node at (7.8,-0.5) {$A$};
	\draw[dashed,rounded corners,thick] (-2.5,-1.5) rectangle (2.5,1.5);
		\node at (-0.5,-1.3) {$\L$};
	\draw[dashed,rounded corners,thick] (-3.5,-2.5) rectangle (5.6,2.5);
		\node at (0.3,-2.3) {$\L\cup\partial\L$};
		
		\end{tikzpicture}
	\endpgfgraphicnamed

	\caption{The boundary law property guarantees the consistency of the family of marginals given by equation \eqref{bl1}.}
	\label{fig:proofpinGGM}
\end{figure}
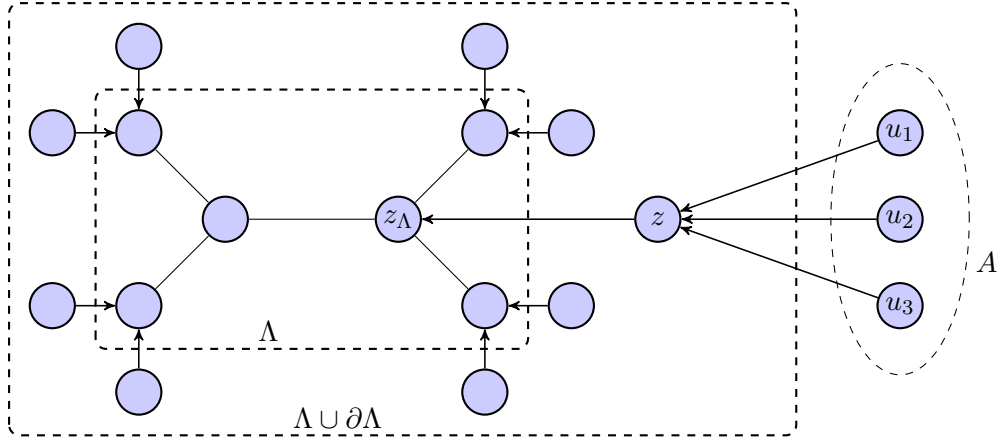

We have $A =\partial z \setminus \{ z_\L \}$.
From the definition of a boundary law we obtain
\begin{equation}\begin{split}\label{eq:cs}
& \sum_{\zeta_A \in \O_A^\nabla} c_\Delta(w,s) \prod_{y \in \partial \Delta} l_{yy_\L} \left( \varphi'_y(s, \zeta)\right)  \prod_{b \cap \Delta \neq \emptyset} Q_b(\zeta_b) \\
& = c_\Delta(w,s) \prod_{y \in \partial \Lambda \setminus \{ z \}} l_{yy_\L} \left(\varphi'_y(s, \zeta)\right) \prod_{b \cap \Lambda \neq \emptyset} Q_b(\zeta_b) \left( \prod_{u \in A} \sum_{\zeta_{uz} \in \Z} l_{uz} \left( \varphi_u'(s, \zeta)\right) Q_{uz} (\zeta_{uz}) \right) \\
& = c_\Delta(w,s) \frac{1}{c_{zz_\L}} \prod_{y \in \partial \Lambda} l_{yy_\L} \left(\varphi'_y(s, \zeta) \right) \prod_{b \cap \Lambda \neq \emptyset} Q_b(\zeta_b).
\end{split}\end{equation}
The second equality holds as $l$ is a $q$-periodic boundary law:
\begin{equation}\begin{split}
\prod_{u \in A} \sum_{\zeta_{uz} \in \Z} l_{uz} \left(\varphi_y'(s, \zeta)\right) Q_{uz} (\zeta_{uz}) 
&= \prod_{u \in A} \sum_{\zeta_{uz} \in \Z} l_{uz} \big( T_q(s + \sum_{b \in \Gamma(w,u)} \zeta_b)\big) Q_{uz} (\zeta_{uz}) \\
&= \prod_{u \in A} \sum_{\zeta_{uz} \in \Z} l_{uz} (s + \sum_{b \in \Gamma(w,u)} \zeta_b) Q_{uz} (\zeta_{uz} ) \\
&= \prod_{u \in A} \sum_{j \in \Z} l_{uz} (j) Q_{uz} (s+ \sum_{b \in \Gamma(w,z)} \zeta_b -j) \\
&= \frac{1}{c_{zz_\L}} l_{zz_\L}( s+ \sum_{b \in \Gamma(w,z)} \zeta_b)  \\
&= \frac{1}{c_{zz_\L}}  l_{zz_\L} (T_q(s+ \sum_{b \in \Gamma(w,z)} \zeta_b)).
\end{split}\end{equation}
The last expression in \eqref{eq:cs} equals the r.h.s of \eqref{cs} up to a factor of 
$\frac{c_\Delta(w,s)}{c_{zz_\L} \cdot c_\Lambda(w,s)}$. 
Summing over $\zeta_{\Lambda \cup \partial \Lambda}$ shows that this factor is $1$. Hence the distributions in \eqref{bl1} are consistent and from Kolmogorov's extension theorem follows that there exists a unique probability measure on the space of the gradient configurations $\Omega^\nabla$ with these exact marginals. 
\end{proof}

\begin{defn}
We call the measure $\nu_{w,s}$ with the marginals given by \eqref{bl1} a {\em pinned gradient 
measure} on the space of gradients $\O^\nabla$. 
\end{defn}

\begin{rem}
%
We will show later that these pinned gradient measures possess a  gradient \textit{Gibbs} property, however not for all volumes.   
Also, these measures will not be homogeneous w.r.t. tree automorphisms, in the same 
way as a Markov chain which is started in a fixed configuration achieves its homogeneity 
in time only asymptotically, for large times. 
Indeed, if we look at a local observable far away from the pinning site $w$ 
the pinning configuration $s\in\Z_q$ will be forgotten by the ergodic theorem for usual 
one-dimensional Markov chains, and the measure looks like an average over different 
pinning configurations.  
\end{rem}

Given a boundary law $\{ l_{xy}\}_{\langle x,y \rangle \in \vec E}$ we define an associated transition matrix by 
$$P_{x,y}(\omega_x,\omega_y)=\frac{Q_{yx}(\omega_y-\omega_x)l_{yx}(\omega_y)}{\sum_{\omega_y \in \Z} Q_{yx}(\omega_y - \omega_x)l_{yx}(\omega_y)}.
$$
If the boundary law has period $q$, then, taking into account this periodicity
 we can introduce the associated transition matrices $\bar P_{xy}:\Z_q\times \Z \mapsto [0,1]$ 
in the following way:
$$P_{x,y}(\omega_x,\omega_y)=:\bar P_{x,y} (T_q (\omega_x) , \omega_y-\omega_x).$$
In this notation $\bar P_{ x,y } (T_q (i), j-i)$ denotes the probability to see an height increase of $j-i$ along the edge $\langle x,y \rangle$ given the class $T_q(i)$ in the vertex $x$. How these matrices can now be used to describe the pinned gradient measures is stated in the next theorem.

\begin{thm}\label{Mar} 
Any pinned gradient measure $ \nu_{w,s}$, which is constructed via a boundary law as in \eqref{bl1} allows a representation of the form 
\begin{equation}
\nu_{w,s}(\eta_{\Lambda \cup \partial \Lambda}=\zeta_{\L\cup\partial\L})
=\prod_{\langle x,y\rangle \in \vec E_w: x,y \in \L \cup \partial \L}\bar P_{x,y}\Bigl(
T_q\bigl (s+\sum_{b\in \Gamma(w,x)}\zeta_b
 \bigr); 
\zeta_{\langle x,y \rangle}\Bigr),
\end{equation}
where $\L \subset V$ is any finite connected set and $\zeta_{\L\cup\partial\L} \in \Z^{\vec E(\L \cup \partial\L)}$.
\end{thm}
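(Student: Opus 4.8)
The plan is to show that the product of transition matrices on the right-hand side is, up to a $\zeta$-independent constant, exactly the boundary-law expression \eqref{bl1} of Theorem \ref{PinGGM}, and then to upgrade this proportionality to an equality by observing that both objects are probability measures on $\Z^{\vec E(\L\cup\partial\L)}$. Throughout I would write $\omega_x := s + \sum_{b\in\Gamma(w,x)}\zeta_b$ for the pinned height at $x$, so that for an oriented edge $\langle x,y\rangle\in\vec E_w$ one has $\omega_y-\omega_x=\zeta_{\langle x,y\rangle}$ and $T_q(\omega_x)=\varphi'_x(s,\zeta)$. By the definition of the matrices, $\bar P_{x,y}(T_q(\omega_x);\zeta_{\langle x,y\rangle})=P_{x,y}(\omega_x,\omega_y)$ splits into the numerator $Q_{yx}(\zeta_{\langle x,y\rangle})\,l_{yx}(\omega_y)$ and the denominator $D_{xy}:=\sum_{\psi\in\Z}Q_{yx}(\psi-\omega_x)\,l_{yx}(\psi)$.

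First I would regard $\L\cup\partial\L$ as a finite tree rooted at $w$, so that $\vec E_w$ restricted to it orients every edge away from $w$ and every non-root vertex $v$ acquires a unique parent $p(v)$. Because $\L$ is a connected subtree, no two vertices of $\partial\L$ are adjacent and each $y\in\partial\L$ has a single neighbour $y_\L=p(y)$ in $\L$; hence the leaves of the rooted tree are exactly the vertices of $\partial\L$, while its internal vertices are those of $\L$. The oriented edges inside $\L\cup\partial\L$ are then precisely the bonds meeting $\L$, so using the symmetry $Q_b(m)=Q_b(-m)$ the product of the numerator $Q$-parts equals $\prod_{\langle x,y\rangle}Q_{yx}(\zeta_{\langle x,y\rangle})=\prod_{b\cap\L\neq\emptyset}Q_b(\zeta_b)$, matching the $Q$-factor of \eqref{bl1} verbatim.

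The heart of the argument is a telescoping of the remaining factor $\prod_{v\neq w}l_{v,p(v)}(\omega_v)\big/\prod_{v\neq w}D_{p(v)v}$ driven by the boundary-law equation \eqref{eq:bl}. Grouping the denominators by their parent vertex $x$ and again using $Q$-symmetry so that $D_{xz}$ coincides with the sum $\sum_{\psi}Q_{zx}(\omega_x-\psi)l_{zx}(\psi)$ appearing in \eqref{eq:bl}, the product $\prod_{z\in\partial x\setminus\{p(x)\}}D_{xz}$ over the children of an internal vertex $x\neq w$ equals $c_{x,p(x)}^{-1}\,l_{x,p(x)}(\omega_x)$, while the product over the children of the root $w$ is the $\zeta$-independent constant $Z_w(s):=\prod_{z\in\partial w}\sum_{\psi}Q_{zw}(\psi-s)l_{zw}(\psi)$, which is finite since $l$ is $q$-periodic and $Q\in l^1(\Z)$. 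Substituting these identities, the factors $l_{x,p(x)}(\omega_x)$ for $x\in\L\setminus\{w\}$ cancel between numerator and denominator, leaving $\prod_{y\in\partial\L}l_{y,y_\L}(\omega_y)$ times the constant $Z_w(s)^{-1}\prod_{x\in\L\setminus\{w\}}c_{x,p(x)}$. Finally $q$-periodicity lets me replace $l_{y,y_\L}(\omega_y)$ by $l_{y,y_\L}(\varphi'_y(s,\zeta))$, so the right-hand side is a constant multiple of \eqref{bl1}. Since each $\bar P_{x,y}(T_q(\omega_x);\cdot)$ is a probability vector, the full product is a probability measure on $\Z^{\vec E(\L\cup\partial\L)}$, exactly as $\nu_{w,s}$ is; two proportional probability measures must coincide, forcing the constant to be $1$.

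I expect the main obstacle to be organizational rather than analytic: correctly setting up the rooted-tree bookkeeping, identifying the leaves of $\L\cup\partial\L$ with $\partial\L$, grouping the denominators $D_{xz}$ by parent so that \eqref{eq:bl} applies child-wise, and isolating the single exceptional factor $Z_w(s)$ coming from the root $w$. Once this grouping is in place the boundary-law recursion does all the work, with the symmetry $Q_b(m)=Q_b(-m)$ being precisely what makes $D_{xz}$ agree with the sums in \eqref{eq:bl}.
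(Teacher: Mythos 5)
Your proof is correct, but it takes a genuinely different route from the paper's. The paper argues probabilistically and inductively: it first computes the single-edge conditional probabilities of $\nu_{w,s}$ by taking ratios of the marginals \eqref{bl1} for two increments $c,d$ (so that all boundary-law factors except the one at the tip of the edge cancel) and then summing over $c$ to normalize, obtaining $\nu_{w,s}(\eta_{\langle u,v\rangle}=d\mid\eta_\Delta=\omega_\Delta)=\bar P_{u,v}\bigl(T_q(s+\sum_{b\in\Gamma(w,u)}\omega_b),d\bigr)$ as in \eqref{ind}; it then verifies the base case $\L=\{w\}$ by explicitly evaluating the normalization constant $c_{e_1,\dots,e_k}(w,s)$, and concludes by conditioning from the inside out. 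You instead prove the identity globally and algebraically: rooting $\L\cup\partial\L$ at $w$, splitting each $\bar P_{x,y}$ into its numerator $Q_{yx}(\zeta_{\langle x,y\rangle})\,l_{yx}(\omega_y)$ and denominator $D_{xy}$, grouping the denominators child-wise so that the recursion \eqref{eq:bl} collapses $\prod_{z\in\partial x\setminus\{p(x)\}}D_{xz}$ into $c_{x,p(x)}^{-1}\,l_{x,p(x)}(\omega_x)$ for interior vertices, letting these interior $l$-factors telescope against the numerators, and absorbing the root factor $\prod_{z\in\partial w}D_{wz}$ into a $\zeta$-independent constant; the observation that two proportional probability measures coincide then replaces the paper's explicit bookkeeping of normalizations. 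Both proofs are powered by the same ingredients (the boundary-law equation, the symmetry $Q_b(m)=Q_b(-m)$, $q$-periodicity, and finiteness of the sums from $Q\in l^1(\Z)$), but your decomposition is a one-shot identity whereas the paper's is a chain-rule induction. What the paper's route buys is the intermediate formula \eqref{ind} for the one-step kernels, which is reused immediately afterwards (e.g.\ in the remark that $\nu_{w,s}$ fails to be a Markov chain on gradients, and in the correlation-decay estimates of Section 5); what your route buys is transparency about where each hypothesis enters and freedom from computing constants. Two points you should make explicit in a write-up: the verification that the product of kernels has total mass one should be organized leaf-to-root (harmless since all terms are nonnegative), and the root is the unique vertex where the grouped denominator runs over all of $\partial w$ rather than $\partial x$ minus a parent, which is precisely why it survives as the constant $Z_w(s)$ rather than being consumed by \eqref{eq:bl}.
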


\begin{proof}
We fix any oriented bond $\langle u,v \rangle \in \vec E$, any increments $c, d \in \Z$ and $\omega \in \Omega^\nabla$. Let $\Lambda \subset V$ be any finite connected set s.t. $u \in \Lambda \subset (-\infty, \langle u, v \rangle)$, where 
$$(- \infty, \langle u,v \rangle) := \{ k \in V: \langle u,v \rangle \in \vec E_k \}$$
represents the ''past'' of the oriented edge $\langle u,v \rangle$.
We set $\Delta = \Lambda \cup \partial \Lambda \setminus \{ v \}$. From the representation we obtained in \eqref{bl1} follows
\begin{equation}\begin{split}
\frac{\nu_{w,s} (\eta_{\langle u,v \rangle} = c \mid \eta_\Delta = \omega_\Delta)}{\nu_{w,s} (\eta_{\langle u,v\rangle} = d \mid \eta_\Delta = \omega_\Delta)} 
= & \frac{\prod_{k \in \partial \Lambda} l_{kk_\L} (\varphi_k'(s, \omega) ) \prod_{b \cap \Lambda \neq \emptyset} Q_b(\omega_b)  }{\prod_{k \in \partial \Lambda} l_{kk_\L} (\varphi_k'(s, \omega)) \prod_{b \cap \Lambda \neq \emptyset} Q_b(\omega_b) } \\
= & \frac{  l_{vu} (\varphi_v'(s, \omega,c) ) Q_{vu} (c) }{ l_{vu} (\varphi_v'(s, \omega,d)) Q_{vu} (d)  },
\end{split}\end{equation}
where $\varphi_v'(s, \omega,c) = T_q( s + \sum_{b \in \Gamma(w,u)} \omega_b +c)$. 

Summing over $c \in \Z$ gives us 
\begin{equation}\begin{split}\label{ind}
\nu_{w,s} (\eta_{\langle u,v \rangle} = d \mid \eta_\Delta = \omega_\Delta) 
&= \frac{l_{vu} (T_q(s + \sum_{b \in \Gamma(w,u)} \omega_b + d)) Q_{vu}(d)}{\sum_{c \in \Z} 
l_{vu} (T_q(s + \sum_{b \in \Gamma(w,u)} \omega_b + c)) Q_{vu}(c)} \\
&= \frac{ l_{vu}( s + \sum_{b \in \Gamma(w,u)} \omega_b + d) Q_{vu}(d)}{\sum_{c \in \Z} 
l_{vu}(s + \sum_{b \in \Gamma(z,u)} \omega_b + c) Q_{vu}(c)} \\
&= P_{u,v} \Big(s+ \sum_{b \in \Gamma(w,u)} \omega_b, s+ \sum_{b \in \Gamma(w,u)} \omega_b + d\Big) \\
&= \bar{P}_{u,v} \Big(T_q (s + \sum_{b \in \Gamma(w,u)} \omega_b), d \Big).
\end{split}\end{equation}

Conditioning inductively from the inside to the outside then proves the claim: 
In the first step let $\Lambda = \{ w \}$. Furthermore, let $v_1,...,v_k$ be the children of $w$ and $e_1,...,e_k$ the oriented bonds connecting them to $w$, i.e. $e_i = \langle v_i, w \rangle$ for $i=1,...,k$. 
From the representation \eqref{bl1} follows
$$\nu_{w,s} (\eta_{e_1, ... ,e_k} = \zeta_1, ... , \zeta_k) = c_{e_1 , ... , e_k}(w,s) \prod_{i=1}^k l_{e_i}( \varphi_{v_i}'(s, \zeta_i)) Q_{e_i} (\zeta_i).$$
As $c_{e_1 , ... , e_k}(w,s) = \prod_{i=1}^k \sum_{\zeta_i \in \Z} l_{e_i}( \varphi_v'(s, \zeta_i)) Q_{e_i}(\zeta_i)$, we find that 
$$\nu_{w,s}(\eta_{e_1 , ... , e_k} = \zeta_1 , ... , \zeta_k) = \prod_{i=1}^k \bar{P}_{w,v_i} \left( T_q (s), \zeta_i \right).
$$
The induction step has been shown in \eqref{ind}. 
\end{proof}


\begin{rem}
The measure $\nu_{w,s}$ is in itself not a Markov chain on the set of gradient configurations. To see this let $\{t,u\}$ be the last bond in the path $\Gamma(w,u)$. Then it is seen from the statement of Theorem \ref{Mar} that $\nu_{w,s} (\eta_{\langle u,v \rangle} = d \mid \eta_\Delta = \omega_\Delta)$ depends on $\{\omega_b\}_{b\in \Gamma(w,u)}$ and not simply on $\omega_{\{t,u\}}$. This is clear as the knowledge of the width of the last step from some vertices $t$ to $u$ does not determine which layer one has reached in $u$. This information can only be recovered by the complete information about the spin increments along the unique path $\Gamma(w, u)$. 
On the other hand, the measure $\nu_{w,s}$ resembles a Markov chain, 
but it has an additional internal degree of freedom in a finite space that needs to be memorized.

\end{rem}

\section{Tree homogeneity by mixing over fuzzy classes}

For finite local state spaces $\Omega_0$ every Markov chain $\mu \in \mathcal{M}_1(\Omega_0^V)$ can be written in the following way: Let $(P_{x,y})_{\langle x,y \rangle \in \vec E}$ be the transition probabilities of $\mu$ and let $\alpha_w$ be the marginal distribution of $\mu$ at some vertex $w\in V$. Then
\begin{equation*}\begin{split}
\mu(\phi_{\Lambda}=\varphi_\L)=\alpha_w(\varphi_w)\prod_{\langle x,y\rangle \in \vec E_w: x,y \in \L}P_{x,y}(\varphi_x,\varphi_y)
\end{split}
\end{equation*}
for every finite connected set $\L$ and each $\varphi_\Lambda \in \Omega_0^\L$ (see \cite[Formula 12.4]{Ge88}).
If the transition matrices $P_{x,y}$ are homogeneous, i.e. independent of the bond, and $\alpha$ is the invariant distribution, then
the measure $\mu$ will possess all tree-symmetries, i.e. $\mu$ is invariant under all graph automorphisms of $V$.

What is an equivalent of this 
in our case of gradient measures, and how do we get tree-symmetries? 
As we will see in the following, if we pin at a vertex $w\in V$ and average over the fuzzy classes $s\in \Z_q$ according to the suitable measure, which is in fact the invariant distribution of the fuzzy transform,
we get full tree-invariance. \\

Let us assume that $Q_b = Q$ for all $b\in E$. Until now $T$ could have been any locally finite tree. From now on we will restrict ourselves to the case of the $d$-regular \textit{Cayley tree}, i.e. $|\partial x| = d+1$ for every $x \in V$. 


We call a vector $l \in (0,\infty)^\Z$ a (spatially homogeneous) boundary law if there exists a constant $c>0$ such that
the consistency equation 
\begin{equation}\label{bl12}
l(i) = c \left(\sum_{j \in \Z} Q(i-j) l(j) \right)^d
\end{equation}
is satisfied for every $i \in \Z$. 

Note that by assumption $l(i)=1$ for every $i\in \Z$ is always a solution. 
 Given such a homogeneous boundary law $l$ we get for the associated transition matrix 
$$P(i,j)=\frac{Q(i-j)l(j)}{\sum_{k \in \Z} Q(i-k)l(k)}.
$$
We recall that $\bar P:\Z_q\times \Z \mapsto [0,1]$ is then given by
$$P(i,j)=:\bar P (T_q (i), j-i).$$
Furthermore, let the fuzzy transform $T_q P : \Z_q \times \Z_q \to [0,1]$ be defined by
\begin{equation}\label{fuz}
T_q P(\bar i, \bar j) = \sum_{j : T_q (j) = \bar j} P(\bar i,j) =: P'(\bar i, \bar j)
\end{equation}
for all layers $\bar i, \bar j \in \Z_q$.

\begin{thm}\label{MainThm} 
Let $\Lambda \subset V$ be any finite connected set and let $w\in \Lambda$ be any vertex. 
Let $\a^{(l)} \in \mathcal{M}_1(\Z_q)$ denote
 the unique invariant distribution for the fuzzy transform 
$T_q P^l$ of the transition matrix $P^l$ corresponding to the $q$-periodic homogeneous boundary law $l$. 
Then the measure $\nu \in \mathcal{M}_1(\Omega^\nabla)$ with marginals given by 
\begin{equation}\begin{split}\label{drei}
\nu(\eta_{\Lambda}=\zeta_{\Lambda})
=\sum_{\varphi'_w \in \Z_q}
\alpha^{(l)}(\varphi'_w)\prod_{\langle x,y\rangle \in \overrightarrow{E}_w: x,y \in \L}\bar P_{x,y}\Bigl(
T_q\bigl (
\varphi'_w+\sum_{b\in \Gamma(w,x)}\zeta_b
 \bigr),
\zeta_{\langle x,y \rangle}\Bigr)
\end{split}
\end{equation}
defines a (spatially) homogeneous GGM. 
\end{thm}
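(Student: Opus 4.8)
The plan is to read the marginals \eqref{drei} as the layer-average
$\nu=\sum_{s\in\Z_q}\alpha^{(l)}(s)\,\nu_{w,s}$ of the pinned gradient measures supplied by Theorem \ref{PinGGM} and Theorem \ref{Mar}, and to extract from the statement the three things that are really being asserted: that $\nu$ is a well-defined probability measure, that it is invariant under all tree automorphisms, and that it satisfies the DLR equation \eqref{DLR} for $\gamma'_\L$. Existence is immediate: for each fixed $s$ the family $\{\nu_{w,s}(\eta_\L=\cdot)\}_\L$ is consistent by Theorem \ref{PinGGM}, a convex combination of consistent families is again consistent, so Kolmogorov's extension theorem produces $\nu$.

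First I would make the invariant distribution explicit, since its precise form drives every later cancellation. Writing $N(i):=\sum_{j\in\Z}Q(i-j)l(j)$, the homogeneous boundary law equation \eqref{bl12} reads $l(i)=c\,N(i)^d$, and $\bar P(\bar i,m)=Q(m)\,l(i+m)/N(i)$. I claim $\alpha^{(l)}(\bar i)=Z^{-1}l(i)^{(d+1)/d}$. To verify $\alpha^{(l)}T_qP=\alpha^{(l)}$ one folds $Q$ to $\hat Q(\bar a):=\sum_{m:\,T_q(m)=\bar a}Q(m)$, notes $\hat Q(\bar a)=\hat Q(-\bar a)$ from $Q(m)=Q(-m)$, and checks the identity $\sum_{\bar i}l_{\bar i}\,\hat Q(\bar j-\bar i)=N(j)$; substituting $l=c\,N^d$ then collapses stationarity to $c^{1/d}N(j)=l(j)^{1/d}$, which is exactly \eqref{bl12}. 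By the assumed uniqueness this is \emph{the} invariant distribution. Two bookkeeping consequences I would record at once: the ratio $\alpha^{(l)}(s)/(N(s)l(s))=Z^{-1}c^{1/d}$ is \emph{independent of $s$}, and the per-increment detailed balance
\begin{equation*}
\alpha^{(l)}(\bar i)\,\bar P(\bar i,m)=\alpha^{(l)}\big(T_q(\bar i+m)\big)\,\bar P\big(T_q(\bar i+m),-m\big)
\end{equation*}
holds, both obtained by inserting $l=c\,N^d$ and $Q(m)=Q(-m)$.

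For homogeneity I would prove that \eqref{drei} does not depend on the pinning vertex $w$; since any automorphism carries the $w$-rooted formula to the $\tau(w)$-rooted one (all other data being homogeneous), root-independence upgrades to invariance under all tree automorphisms. By connectedness it suffices to compare two neighbours $w\sim w'$ inside a common finite connected $\L$. Deleting the bond $\{w,w'\}$ disconnects the tree, so every other bond of $\L$ carries the same orientation away from $w$ and away from $w'$; matching the pinning classes by $s'=T_q(s+\zeta_{\langle w,w'\rangle})$ makes the class $T_q\!\big(\cdot+\sum_{\Gamma(\cdot)}\zeta\big)$ at each vertex agree in the two products, so all factors coincide except the one on $\{w,w'\}$, which is $\bar P(s,\zeta_{\langle w,w'\rangle})$ against $\bar P(s',-\zeta_{\langle w,w'\rangle})$. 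Re-indexing the $w'$-sum by this bijection and invoking detailed balance equates the two marginals term by term, hence $\nu^{(w)}=\nu^{(w')}$.

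The GGM property is the main work: choosing (by the root-independence just proved) the pinning site $w\in\L$ and a large connected $\L'\supset\L\cup\partial\L$, I expand $\nu(\eta_{\L'}=\cdot)$ by \eqref{drei}, where the Markov structure factorizes the product into the bonds meeting $\L$ and the outer subtrees hanging off each $y\in\partial\L$, coupled only through the class $T_q\!\big(s+\sum_{\Gamma(w,y)}\zeta\big)$ at $y$. Telescoping each piece with $\bar P(\bar i,m)=Q(m)l(i+m)/N(i)$ produces at every $y\in\partial\L$ a factor $l(c_y)$ from the inner piece and a factor $N(c_y)^{-d}$ from the root of the outer subtree; by $l=c\,N^d$ their product is the \emph{constant} $c$, so the dependence on the $\partial\L$-classes cancels between inside and outside. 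What survives inside is $\prod_{b\cap\L\neq\emptyset}Q(\rho_b)$ times the $s$-free ratio $\alpha^{(l)}(s)/(N(s)l(s))$, and the remaining sum over $s$ merely shifts the classes feeding the outer subtrees and re-indexes to a quantity independent of the inner configuration $\rho$. Dividing by the normalization, the $\nu$-conditional law of the inner gradients given $\mathcal{T}_\L$ becomes $\prod_{b\cap\L\neq\emptyset}Q(\rho_b)\,\mathbf{1}_{[\rho_{\partial\L}]=[\zeta_{\partial\L}]}$ up to a constant, which is exactly $\gamma'_\L$; letting $\L'\uparrow V$ gives $\nu\gamma'_\L=\nu$. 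I expect this last step to be the real obstacle: tracking how the single layer offset $s$ interacts with the relative boundary heights $[\zeta_{\partial\L}]$ that $\mathcal{T}_\L$ retains, and justifying the telescoping across the \emph{infinite} outer subtrees through the finite approximations $\L'\uparrow V$. The two identities for $\alpha^{(l)}$ are precisely what make the cancellations collapse to constants, so establishing them first is essential.
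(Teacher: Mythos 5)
Your proposal is correct, and its first two thirds coincide with the paper's own proof: your formula $\alpha^{(l)}(\bar i)=Z^{-1}l(\bar i)^{(d+1)/d}$ is, after substituting the boundary law equation $l=c\,N^d$ with $N(i)=\sum_{j\in\Z}Q(i-j)l(j)$, exactly the paper's invariant measure $\alpha(\bar i)\propto l(\bar i)\,N(\bar i)$; your per-increment detailed balance is the paper's reversibility identity \eqref{homo}; and your adjacent-root swap with the matching $s'=T_q\bigl(s+\zeta_{\langle w,w'\rangle}\bigr)$ is precisely the induction step by which the paper switches the pinning site from $v_\L$ to $v$. Where you genuinely diverge is the DLR step. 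The paper keeps the pinned measures $\nu_{w,s}$ of \eqref{bl1} in play, proves a \emph{restricted} gradient Gibbs property for them --- valid only when the pinning site $w$ lies \emph{outside} $\L$, because then the layer at $w$ together with $[\omega_{\partial\L}]$ and the outside gradients determines the classes at $\partial\L$, so the boundary-law factors cancel between numerator and denominator --- and then uses the already established homogeneity to move $w$ out of $\L$ before averaging over $s$ with weights $\alpha$. You instead keep $w\in\L$ and telescope $\bar P=Ql/N$ through a large volume $\L'$; your interior cancellations $l(c_x)N(c_x)^{-d}=c$ (including $\alpha(s)N(s)^{-(d+1)}=\mathrm{const}$ at the root) reproduce what the paper states only as a remark \emph{after} the theorem, namely the alternative representation \eqref{alt} in terms of $\sum_{k\in\Z_q}\prod_{y\in\partial\L'}l\bigl(k+\sum_{b\in\Gamma(w,y)}\zeta_b\bigr)\prod_b Q(\zeta_b)$, from which you extract the conditional kernel $\gamma'_\L$ directly. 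That route works and buys a proof that never invokes the pinned measures' restricted Gibbs property; what it costs is that the step you yourself flag as the obstacle genuinely carries the weight: conditional on $[\rho_{\partial\L}]=[\zeta_{\partial\L}]$ and the outside gradients, two admissible inside configurations shift all exit sums $\sum_{b\in\Gamma(w,v)}\rho_b$, $v\in\partial\L$, by one \emph{common} integer $t$, and only the bijection $k\mapsto T_q(k+t)$ of $\Z_q$ makes the boundary factor $\sum_k\prod_{y\in\partial\L'}l(\cdot)$ independent of the inside configuration --- this reindexing, which you assert but do not verify, is the exact analogue of the paper's cancellation at an outside pinning site, and it is where the mixing over layers is indispensable (a single $\nu_{w,s}$ with $w\in\L$ fails here, consistent with the paper's warning that the cancellation ``can not be used for pinning vertices $w$ inside of $\L$''). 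The closing limit $\L'\uparrow V$ is harmless, since the conditional probabilities you compute do not depend on $\L'$.
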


\begin{rem}
The gradient measures we constructed are non-trivial linear combinations 
of gradient measures which are non-homogeneous w.r.t. tree automorphisms but obtained 
with an "initial condition" at a singled out site $w$. 
They are averaged quenched measures of tree-indexed Markov chains in 
periodic environment (what probabilists often 
would call "annealed measures"). 
To get spatial homogeneity the average $\a$ over the environment is chosen in the following way: 
 
If we choose an initial fuzzy configuration $\varphi'_w$ at the site $w$ according to $\alpha$, then 
the variables 
$T_q\bigl (
\varphi'_w+\sum_{b\in \Gamma(w,x)}\zeta_b
 \bigr)$ have the very same distribution $\alpha$. 
 Hence this mixing measure $\alpha$ must be the invariant distribution 
 for the fuzzy transform. 
\end{rem}

\begin{proof}
The proof consists of considering 
the non-normalized measure 
\begin{equation}
(i,j)\mapsto l(i) Q(i-j) l(j),
\end{equation}
with $i,j\in \Z$. 
In a situation of finite local state spaces that would be 
a measure which is normalizable to a probability measure. 
In our situation this may be the case, for rapidly decaying $l(i)$'s, but in interesting cases, and in particular 
for height-periodic boundary laws, this is certainly not the case. 
Nevertheless, by dividing out the height-period $q$ we obtain all desired
 objects, namely 
all relevant single-site probability measures and transition kernels in terms of simple expressions. 
 
The invariant distribution of the fuzzy transition matrix \eqref{fuz} is given by
\begin{equation}
\a( \bar i)= \frac{l(\bar i) \sum_{j \in \Z}Q(\bar i-j) l(j)}{\sum_{\bar k\in \Z_q} l(\bar k) \sum_{j\in\Z}Q(\bar k-j) l(j)}
\end{equation}
for $\bar i\in \Z_q$. 
It is readily verified that $\a$ is indeed invariant under the application of $P'$.
From this we have that reversibility for $\alpha$ and $\bar P$ in the following form holds:
\begin{equation}\begin{split}\label{homo}
 \a(\bar i)\bar{P} (\bar i, j-i)=
\frac{ l(i) Q(i-j) l(j)}{ \sum_{\bar k \in \Z_q} l(\bar k) \sum_{m\in\Z} Q(k-m) l(m)}
= \a(\bar j) \bar{P} (\bar j, i-j),
\end{split}
\end{equation}
where $i, j$ are any integers such that $T_q (i)=\bar i$, $T_q (j)=\bar j$. 
The formula describes an interplay (a reversibility) of invariant distribution on layers and 
layer-dependent transition matrix. 

The spatial homogeneity of the layer-averaged pinned gradient measures as given in \eqref{drei} can now be seen as follows:\\
For a volume consisting of two neighboring sites, the spatial homogeneity 
is just the previous reversibility formula \eqref{homo}. \\
For general finite subtrees $\L$ we use induction over the number of sites, 
with hypothesis: The r.h.s.  of \eqref{drei} yields the same expression 
for all pinning sites $w\in \L$. 
Consider now a larger volume $\L'=\L\cup\{v\}$, where $v$ is a site adjacent to $\L$. 
To see that $\nu(\eta_{\L'}=\zeta_{\L'})$ can be written in the form of a r.h.s. of \eqref{drei} 
with also the pinning site $v$ allowed we argue as follows. 
First use the induction hypothesis to write $\nu(\eta_{\L'}=\zeta_{\L'})$ in terms 
of the pinning site $v_{\L}$ (which we recall is the unique neighbor of $v$ such that $v_\Lambda \in \L$). 
Next use the reversibility equation \eqref{homo} to switch the pinning site to $v$:
\begin{equation}\begin{split}
 \nu(\eta_{\Lambda'}=\zeta_{\Lambda'})
=&\sum_{\varphi'_{v_\L} \in \Z_q}
\alpha^{(l)}(\varphi'_{v_\L})\prod_{\langle x,y\rangle \in \vec E_{v_\L}: x,y \in \L}\bar P_{x,y}\Bigl(
T_q\bigl (
\varphi'_{v_\L}+\sum_{b\in \Gamma(v_\L,x)} \zeta_b
 \bigr),
\zeta_{\langle x,y \rangle}\Bigr) \\
= & \sum_{\varphi'_{v_\L} \in \Z_q}
\prod_{\langle x,y\rangle \in \vec E_{v_\L}: x,y \in \L \setminus \langle v_\L, v \rangle}\bar P_{x,y}\Bigl(
T_q\bigl (
\varphi'_{v_\L}+\sum_{b\in \Gamma(v_\L,x)} \zeta_b
 \bigr),
\zeta_{\langle x,y \rangle}\Bigr) \\
& \qquad \times  \alpha^{(l)}(\varphi'_{v_\L})   \times \bar P_{v_\L, v} \Bigl( T_q(\varphi_{v_\L}'), \zeta_{\langle v_\L, v \rangle} \Bigr)
\\
=& \sum_{\varphi'_{v} \in \Z_q}
\prod_{\langle x,y\rangle \in \vec E_{v_\L}: x,y \in \L \setminus \langle v_\L, v \rangle}\bar P_{x,y}\Bigl(
T_q\bigl (
\varphi'_{v}+\sum_{b\in \Gamma(v,x)} \zeta_b
 \bigr),
\zeta_{\langle x,y \rangle}\Bigr) \\
& \qquad \times  \underbrace{   \alpha^{(l)}(\varphi'_{v} + \zeta_{\langle v, v_\L \rangle})  \times \bar P_{v_\L, v} \Bigl( T_q(\varphi_{v}' + \zeta_{\langle v_\L, v \rangle}), \zeta_{\langle v_\L, v \rangle} \Bigr)
}_{= \alpha^{(l)}(\varphi'_{v}) \times \bar P_{v, v_\L} \Bigl( T_q(\varphi_{v}'), \zeta_{\langle v, v_\L \rangle} \Bigr)
} \\
=& \sum_{\varphi'_{v} \in \Z_q}    \alpha^{(l)}(\varphi'_{v})
\prod_{\langle x,y\rangle \in \vec E_{v}: x,y \in \L }\bar P_{x,y}\Bigl(
T_q\bigl (
\varphi'_{v}+\sum_{b\in \Gamma(v,x)} \zeta_b
 \bigr),
\zeta_{\langle x,y \rangle}\Bigr).
\end{split}
\end{equation}

It remains to prove that $\nu$ is indeed a gradient Gibbs measure in the sense that it 
satisfies the DLR equation w.r.t. the gradient specification, i.e.
\begin{equation}\label{DLR2}
\nu(\cdot \mid \mathcal{T}_\L) = \gamma'_\L
\end{equation}
$\nu$-almost surely for all finite sub-volumes $\L \subset V$. At first, consider the pinned gradient measures $\nu_{w,s}$. As has been noted before these measures 
are \textit{no} gradient \textit{Gibbs} measures, but they do have a {\em restricted gradient property} which holds for conditional 
probabilities in volumes away from the pinning site. 
More precisely: 
Let $\zeta, \omega \in \Omega^\nabla$ be any two gradient configurations, $\L, \Delta \subset V$ any finite connected sets with $\L \subset \Delta$ and $w \in (\Delta\cup\partial\Delta)\setminus\L$. Then by using representation \eqref{bl1} we see that
\begin{equation}\begin{split}
 & \frac{\nu_{w,s}(\eta_{\L\cup\partial\L} = \zeta_{\L\cup\partial\L} \mid [\eta_{\partial\L}] = [\omega_{\partial\L}], \eta_{(\Delta\cup\partial\Delta)\setminus\L}) = \omega_{(\Delta\cup\partial\Delta)\setminus\L})}{\nu_{w,s}(\eta_{\L\cup\partial\L} = \omega_{\L\cup\partial\L} \mid [\eta_{\partial\L}] = [\omega_{\partial\L}], \eta_{(\Delta\cup\partial\Delta)\setminus\L}) = \omega_{(\Delta\cup\partial\Delta)\setminus\L})}  \\
& \qquad \qquad = \left( \prod_{b\cap\L\neq\emptyset} \frac{Q_b(\zeta_b)}{Q_b(\omega_b)} \right) \mathbf{1}_{\{[\zeta_{\partial\L}] = [\omega_{\partial\L}]\}},
\end{split}\end{equation}
where we have used cancellations of the boundary law terms due to combined information 
of relative height information $[\omega_{\partial\L}]$ {\em and} layer information 
due to the pinning to layer $s$ at site $w$ outside of $\L$ (see Figure \ref{fig:mainthm}). 
(Note that this cancellation of boundary law terms
can not be used for pinning vertices $w$ inside of $\L$.)

\begin{figure}
\centering
	
	\beginpgfgraphicnamed{mainthm}
	
		\begin{tikzpicture}[scale=1.35]
		
		\node[dot] (x) at (-1,0) {$$};
		
		\node[dot] (y) at (0,0)  {$$}
			edge			(x);
		
		\node[dot] (w) at +([shift={(45:1)}]y) {$$}
			edge  		(y);
		
		\node[dot] (w1) at +([shift={(75:1)}]w) {$$}
			edge  		(w);
		\node[dot] (w2) at +([shift={(15:1)}]w) {$$}
			edge  		(w);	
		\node at +([shift={(-0.1,-0.2)}]w2) {$u$};
		
		\node[dot] (w11) at +([shift={(105:1)}]w1) {$$}
			edge  		(w1);
		\node[dot] (w21) at +([shift={(45:1)}]w1) {$$}
			edge  		(w1);

		\node[dot] (w21) at +([shift={(45:1)}]w2) {$$}
			edge  		(w2);
		\node[dot] (w22) at +([shift={(-15:1)}]w2) {$$}
			edge  [post]		(w2);	
		\node at +([shift={(-0.1,-0.2)}]w22) {$w$};
		\node at +([shift={(-0.3,0.3)}]w22) {$\omega_{\langle w,u \rangle}$};
		
		\node[dot] (b) at +([shift={(315:1)}]y) {$$}
			edge  		(y);
		\node (bal) at +([shift={(-0.25,0)}]b) {$v_\L$};
	
		\node[dot] (b1) at +([shift={(285:1)}]b) {$$}
			edge  		(b);
		\node[dot] (b2) at +([shift={(345:1)}]b) {$$}
			edge  [post]		(b);	
		
		\node at +([shift={(-0.1,-0.2)}]b2) {$v$};
		\node at +([shift={(-0.3,0.35)}]b2) {$l_{vv_\L}$};
		
		\node[dot] (b11) at +([shift={(255:1)}]b1) {$$}
			edge  		(b1);
		\node[dot] (b21) at +([shift={(315:1)}]b1) {$$}
			edge  		(b1);	
			
		\node[dot] (b21) at +([shift={(375:1)}]b2) {$$}
			edge  		(b2);
		\node[dot] (b22) at +([shift={(315:1)}]b2) {$$}
			edge  		(b2);	
		
		\node[dot] (a) at +([shift={(135:1)}]x) {$$}
			edge  	(x);

		\node[dot] (a1) at +([shift={(105:1)}]a) {$$}
			edge  		(a);
		\node[dot] (a2) at +([shift={(165:1)}]a) {$$}
			edge  		(a);	

		\node[dot] (a11) at +([shift={(135:1)}]a1) {$$}
			edge  		(a1);
		\node[dot] (a21) at +([shift={(85:1)}]a1) {$$}
			edge  		(a1);	
			
		\node[dot] (a21) at +([shift={(195:1)}]a2) {$$}
			edge  		(a2);
		\node[dot] (a22) at +([shift={(135:1)}]a2) {$$}
			edge  		(a2);	
			
		\node[dot] (u) at +([shift={(225:1)}]x) {$$}
			edge  		(x);
		\node[dot] (u1) at +([shift={(195:1)}]u) {$$}
			edge  		(u);
			
		\node[dot] (u2) at +([shift={(255:1)}]u) {$$}
			edge  		(u);	
		
		\node[dot] (u11) at +([shift={(225:1)}]u1) {$$}
			edge  		(u1);
		\node[dot] (u21) at +([shift={(165:1)}]u1) {$$}
			edge  		(u1);	
			
		\node[dot] (u21) at +([shift={(225:1)}]u2) {$$}
			edge  		(u2);
		\node[dot] (u22) at +([shift={(285:1)}]u2) {$$}
			edge  		(u2);	
		
		\node (lambda) at (-0.5,-1.2) {$\L$}; 
		\node (delta) at (2.6,-2.6) {$\Delta\cup\partial\Delta$}; 
		\node (equiv) at (2.5,0) {$[\omega_{\partial\L}]$};

		\draw[very thick, gray] ([shift=(180:2.2cm)]u) arc (180:270:2.2cm);
		\draw[very thick, gray] ([shift=(90:2.2cm)]a) arc (90:180:2.2cm);
		\draw[very thick, gray] ([shift=(0:2.2cm)]w) arc (0:90:2.2cm);
		\draw[very thick, gray] ([shift=(270:2.2cm)]b) arc (270:360:2.2cm);
		
		\draw[very thick, gray] (-1.707, -2.907) -- (0.707,-2.907);
		\draw[very thick, gray] (-1.707, 2.907) -- (0.707,2.907);
		\draw[very thick, gray] (-3.907, -0.707) -- (-3.907,0.707);
		\draw[very thick, gray] (2.907, -0.707) -- (2.907,0.707);

		\draw[rounded corners,dotted, thick] (-2,-1) rectangle (1,1);
		
		\draw[dashed, gray, thick] (b2) .. controls +(right:5mm) and +(right:5mm)
				.. (w2);	
			
		\end{tikzpicture}
	\endpgfgraphicnamed	
	
	\caption{For pinned layer at vertex $w\in (\Delta\cup\partial\Delta) \setminus \L$ the relative height information $[\omega_{\partial \L}]$ along with the gradient configuration $\omega_{\langle w,u \rangle}$ allows to recover the layer at any vertex $v \in \partial\L$. Therefore the boundary law $l_{vv_\L}$ does not depend on the gradient configuration inside $\L$.}
	\label{fig:mainthm}
\end{figure}
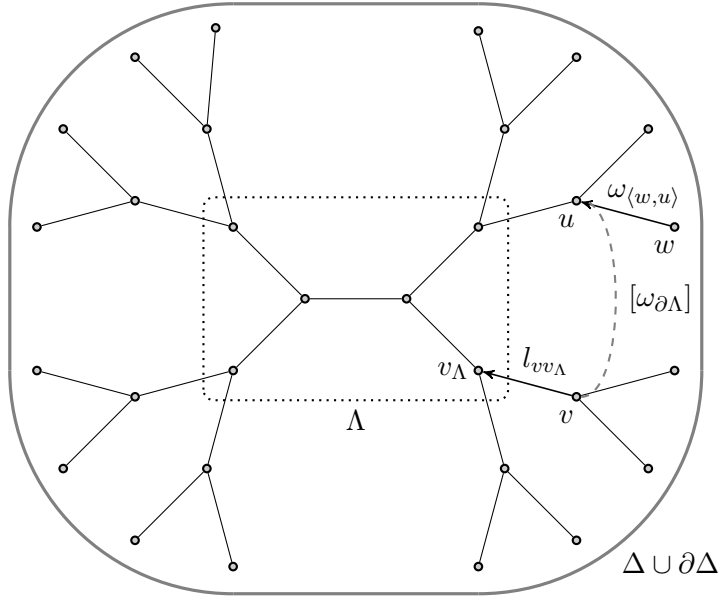

Summing over $\zeta_{\L\cup\partial\L}$ yields
\begin{equation}\begin{split}
& \nu_{w,s}(\eta_{\L\cup\partial\L} = \omega_{\L\cup\partial\L} \mid [\eta_{\partial\L}] = [\omega_{\partial\L}], \eta_{(\Delta\cup\partial\Delta)\setminus\L}) = \omega_{(\Delta\cup\partial\Delta)\setminus\L}) \\
&=  \frac{\prod_{b \cap \L \neq \emptyset} Q_b(\omega_b)}{\sum_{\zeta_{\L\cup\partial\L}} \left( \prod_{b\cap\L\neq\emptyset} Q_b(\zeta_b) \right) \mathbf{1}_{\{ [\zeta_{\partial\L}]=[\omega_{\partial \L}] \}}} \\
&=  \gamma'_\L(\omega_{\L\cup\partial\L} \mid [\omega_{\partial\L}],  \omega_{(\Delta\cup\partial\Delta)\setminus\L}).
\end{split}\end{equation}
Clearly 
\begin{equation}\begin{split}
&\nu_{w,s}(\eta_{\L\cup\partial\L}= \rho_{\L\cup\partial\L} \mid [\eta_{\partial\L}] = [\omega_{\partial\L}], \eta_{(\Delta\cup\partial\Delta)\setminus\L}=\omega_{(\Delta\cup\partial\Delta)\setminus\L}) \\
&\qquad \qquad = \gamma'_\L(\rho_{\L\cup\partial\L} \mid [\omega_{\partial\L}],  \omega_{(\Delta\cup\partial\Delta)\setminus\L}) =0
\end{split}\end{equation}
for any $\rho \in \Omega^\nabla$ with $[\rho_{\partial\L}] \neq [\omega_{\partial\L}]$ and hence the restricted gradient Gibbs property holds.

This property is now sufficient to prove that the spatially homogeneous gradient measure $\nu$ is indeed a GGM: Under the same assumptions as before we have
\begin{equation}\begin{split}
& \nu(\eta_{\L\cup\partial\L}= \rho_{\L\cup\partial\L} \mid [\eta_{\partial\L}] = [\omega_{\partial\L}], \eta_{(\Delta\cup\partial\Delta)\setminus\L}=\omega_{(\Delta\cup\partial\Delta)\setminus\L}) \\ 
&= \sum_{s \in \Z_q} \alpha(s) \nu_{w,s}(\eta_{\L\cup\partial\L}= \rho_{\L\cup\partial\L} \mid [\eta_{\partial\L}] = [\omega_{\partial\L}], \eta_{(\Delta\cup\partial\Delta)\setminus\L}=\omega_{(\Delta\cup\partial\Delta)\setminus\L})\\
&= \sum_{s \in \Z_q} \alpha(s) \gamma'_\L(\rho_{\L\cup\partial\L} \mid [\omega_{\partial\L}], \omega_{(\Delta\cup\partial\Delta)\setminus\L}) \\
&= \gamma'_\L(\rho_{\L\cup\partial\L} \mid [\omega_{\partial\L}], \omega_{(\Delta\cup\partial\Delta)\setminus\L}),
\end{split}\end{equation}
where we have switched the pinning vertex $w$ to some vertex outside $\L$ in the second step and then used the restricted gradient Gibbs property. 
Hence, the measure $\nu$ meets the DLR equation \eqref{DLR2} and therefore $\nu \in \GG^\nabla(\g)$.
\end{proof}

\begin{rem} 
 These GGMs also have a different representation which does not make explicit use of the invariant distribution $\a$ of the fuzzy transition matrix $P'$. Let $\L\subset V$ be any finite connected sub-volume and $w \in \L$ some pinning vertex. Then the measures given by \eqref{drei} can be written as
 \begin{equation}\label{alt}
 \nu (\eta_{\L\cup\L} = \zeta_{\L\cup\partial\L}) = c_\L \sum_{k\in\Z_q} \left( \prod_{y \in \partial\L} l_{yy_\L}\big(k + \sum_{b \in \Gamma(w,y)} \zeta_{b}\big) \prod_{b \cap \L \neq \emptyset} Q_b(\zeta_b) \right).
 \end{equation} 
Indeed, for a single-site volume $\L = \{ w \}$ representation \eqref{drei} of the GGM gives us
\begin{equation}\begin{split}
& \nu(\eta_{w\cup\partial w} = \zeta_{w\cup\partial w}) = \sum_{k\in \Z_q} \a(k) c(w,k) \prod_{y \in \partial i} l_{yw}(k + \zeta_{yw}) Q_{yw}(\zeta_{yw}) \\
&= \sum_{k\in\Z_q}  \left( \frac{l(k) \sum_{j \in \Z} Q(k-j)l(j)}{\sum_{m\in\Z_q}l(m) \sum_{j\in\Z}Q(m-j)l(j)} \left(\frac{1}{\sum_{j\in\Z} Q(k-j)l(j)}\right)^{d+1} \right. \\
 & \left. \qquad \quad \quad \times \prod_{y \in \partial i} l_{yw} (k+\zeta_{yw}) Q_{yw}(\zeta_{yw}) \right)\\
&= c_w \sum_{k \in \Z_q}  \prod_{y \in \partial w} l_{yw} (k+\zeta_{yw}) Q_{yw}(\zeta_{yw}),
\end{split}\end{equation}
where $c_w >0$ is a normalizing constant. In the second step we have used that $l$ is a boundary law and hence $l(k) / (\sum_{j \in \Z} Q(k-j)l(j))^d$ is a constant. For larger sub-volumes $\L$ the statement follows from a simple induction argument by using again the consistency property of boundary laws \eqref{eq:bl}.  

As mentioned in words before, we can also get the tree-automorphism invariant measure $\nu$ 
as a weak limit when we send the pinning vertex $w$ to infinity along any path, that is 
$\nu=\lim_{w \uparrow \infty}\tau_w \nu_{w,s}$ for any class $s$, where 
$\tau_w$ is the shift of the pinning point. 
\end{rem}

 \section{Applications}
 
 \subsection{Examples of GGMs for $q$-periodic boundary laws with $q\leq4$}
 A spatially homogeneous boundary law $l \in (0,\infty)^\Z$ on the Cayley tree of degree $d$ has to satisfy the recursive relation equation \eqref{bl12}. If $l$ is assumed to be $q$-periodic there exist positive constants $a_0, a_1,...,a_{q-1}$, s.t. 
 $l(i) = a_k$ if $i \text{ mod } q = k$. Note that we can always normalize a boundary law s.t. $a_0=1$.
Under this assumption \eqref{bl12} reduces to
 \begin{equation}\begin{split}
 a_k &= c \left( a_k Q(0) + \sum_{m=1}^q \left( (a_{k+m} + a_{k-m}) \sum_{j=0}^\infty Q(qj+m) \right) \right)^d
 \end{split}\end{equation}
 for any $k\in\Z_q$.
In the simplest possible non-trivial case of a $2$-periodic boundary law, i.e. $l(i) = 1$ for $i \text{ mod }2=0$ and $l(i) = a$ for $i \text{ mod }2=1$ with $a>0$, the system of b.l. equations becomes
\begin{equation}\begin{split}\label{ex1}
1 &= c \left( Q(0) + 2 \sum_{j=1}^\infty Q(2j) + 2a \sum_{j=0}^\infty Q(2j+1) \right)^d, \\
a &= c \left( 2 \sum_{j=0}^\infty Q(2j+1) +a \left(Q(0) + 2 \sum_{j=1}^\infty Q(2j)\right) \right)^d.
\end{split}\end{equation}  
Let us assume that we have a binary tree, i.e. $d=2$. 
Introducing $u = \sqrt{a}$ we have 
\begin{equation}
u = \frac{u^2 + 2u^2\sum_{j=1}^\infty Q(2j) + 2 \sum_{j=0}^\infty Q(2j+1)}{1 + 2 \sum_{j=1}^\infty Q(2j) + 2u^2 \sum_{j=0}^\infty Q(2j+1)}.
\end{equation}
In the case of the SOS-model, i.e. $Q_\beta(i,j) = e^{-\beta |i-j|}$ with $\b\in(0,\infty)$ being a free parameter (inverse temperature), we are left to solve the equation
\begin{equation}\begin{split}
\frac{1}{\sinh(\b)} (u-1) \left( u^2 + (1-\cosh(\b)) u + 1 \right)
=0.
\end{split}\end{equation}
Apart from the trivial solution $u_1=1$ the additional non-negative solutions
\begin{equation}
u_{2,3} = \frac{\cosh(\b)-1}{2} \pm \frac{\sqrt{\cosh^2(\b) -2\cosh(\b)-3}}{2}
\end{equation}
appear for $\cosh(\b) \geq 3$. Note that the corresponding boundary laws clearly do not meet the normalization requirement by Zachary \eqref{eq:norm}. 

A more indirect approach for a Cayley tree of arbitrary degree is described in the following. Note that the solutions we have found correspond to Ising-type boundary laws with some suitable interaction potential $\tilde U$: In the regular Ising model with state space $\Omega = \{-1,+1\}^V$ the b.l.'s are of the form $l=(1,a), a >0$, and the consistency equation boils down to
\begin{equation}\label{ising}
a = \left( \frac{\tilde Q(-,+) + a \tilde Q(+,+)}{\tilde Q(-,-)+ a \tilde Q(+,-)} \right)^d,
\end{equation}
where 
$$\tilde Q(-,-) = \tilde Q(+,+)= \exp(-\tilde U (+,+))$$
and 
$$\tilde Q(-,+) = \tilde Q(+,-) = \exp(- \tilde U(+,-)).$$
Compare to \cite[Formula 12.21]{Ge88}.
The second equation of \eqref{ex1} is therefore of Ising-type with 
\begin{equation}\begin{split}
- \tilde U(+,+) &= \log \tilde Q(+,+) = \log \left( Q(0) + \sum_{j=1}^\infty Q(2j) \right) = - \log \tanh(\b)
\end{split}\end{equation}
and 
\begin{equation}\begin{split}
- \tilde U(+,-) &= \log \tilde Q(+,-) = \log \left( 2 \sum_{j=0}^\infty Q(2j+1) \right) = - \log \sinh \b.
\end{split}\end{equation}
By adding a suitable constant $c \in \R$ to $\tilde U$ this interaction potential can be chosen to have the usual form
$$\tilde U (\s_i, \s_j) = - \tilde \b \s_i \s_j + c.$$
Indeed, in this case 
\begin{equation}\begin{split}
- \tilde U(+,+) &= - \log \tanh(\b) = \tilde \b - c, \\
- \tilde U(+,-) &= - \log \sinh(\b) = - \tilde \b - c,
\end{split}\end{equation}
which leads to
\begin{equation}\begin{split}
c(\b) &=  \frac{1}{2} \log \frac{\sinh^2 (\b)}{\cosh(\b)}, \\
\tilde \b(\b) &= \frac{1}{2} \log \cosh(\b).
\end{split}\end{equation}
For the Ising model on the Cayley tree of order $d$ it is well known that the critical value for the inverse temperature is given by $\tilde \b_{c,d}= \coth^{-1}(d)$, i.e. there exist multiple solutions to the b.l. equation if and only if $\tilde \b > \tilde \b_{c,d}$ \cite{Ly89}. 
As $\b \mapsto \tilde \b(\b)$ is a positive and monotone increasing function for $\b \in (0,\infty)$ with $\lim_{\b \to \infty} \tilde \b(\b) = \infty$ there is also a critical value $\b_{c,d}$ for the existence of multiple solutions to the gradient b.l. equations, namely $\b_{c,d} = \cosh^{-1}( \exp(2 \coth^{-1}(d))) = \cosh^{-1}((d+1)/(d-1))$. For $d=2$ the critical value is $\b_{c,2} = \cosh^{-1} (3)$, which confirms our earlier result. 


For $q=3$ the b.l. equations for the $q$-periodic gradient situation are
\begin{equation}\begin{split}
a &= \left( \frac{a (Q(0) + 2 \sum_{j=1}^\infty Q(3j) ) + (1+b) (\sum_{j=0}^\infty Q(3j+1) + \sum_{j=0}^\infty Q(3j+2)) }{ Q(0) + 2 \sum_{j=1}^\infty Q(3j)  + (a+b) (\sum_{j=0}^\infty Q(3j+1) + \sum_{j=0}^\infty Q(3j+2))} \right)^d \\
&= \left( \frac{a \left( 1+ \frac{2}{e^{3\b}-1}\right) + (1+b) \left( \frac{\cosh(\b/2)}{\sinh(3\b/2)} \right) }{ 1+  \frac{2}{e^{3\b}-1}  +  (a+b) \frac{\cosh(\b/2)}{\sinh(3\b/2)} } \right)^d
\end{split}\end{equation}
and 
\begin{equation}\begin{split}
b &= \left( \frac{b (Q(0) + 2 \sum_{j=1}^\infty Q(3j) ) + (1+a) (\sum_{j=0}^\infty Q(3j+1) + \sum_{j=0}^\infty Q(3j+2)) }{ Q(0) + 2 \sum_{j=1}^\infty Q(3j)  + (a+b) (\sum_{j=0}^\infty Q(3j+1) + \sum_{j=0}^\infty Q(3j+2))} \right)^d \\
&=\left( \frac{b \left( 1+ \frac{2}{e^{3\b}-1}\right) + (1+a) \left( \frac{\cosh(\b/2)}{\sinh(3\b/2)} \right) }{ 1+  \frac{2}{e^{3\b}-1}  +  (a+b) \frac{\cosh(\b/2)}{\sinh(3\b/2)} } \right)^d
\end{split}\end{equation}
For the regular Potts model with $q=3$ with an interaction potential $\tilde U(\s_i,\s_j) = -\tilde \b \mathbf{1}_{\s_i = \s_j} - c$, $c \in \R$, and boundary law $l=(1,a,b)$ the b.l. equations are of a similar form
\begin{equation}
a = \left( \frac{a e^{\tilde \b +c} + (1+b)e^c}{e^{\tilde \b + c} + (a+b) e^c} \right)^d
\end{equation}
and 
\begin{equation}
b = \left(  \frac{b e^{\tilde \b +c} + (1+a)e^c}{e^{\tilde \b + c} + (a+b) e^c} \right)^d.
\end{equation}
Hence,
\begin{equation}\begin{split}
e^{\tilde \b + c} &= 1 + \frac{2}{e^{3\b}-1} = \frac{\cosh(3\b/2)}{\sinh(3 \b /2)}, \\
e^c &= \frac{\cosh(\b/2)}{\sinh(3\b/2)},
\end{split}\end{equation}
which gives us 
\begin{equation}\begin{split}
\tilde\b(\b) &= \log \frac{\cosh(3\b/2)}{\cosh(\b/2)} = \log (2 \cosh(\b) - 1).
\end{split}\end{equation}
For the binary tree the critical value for the inverse temperature is known to be $\tilde\b_{c,2} = \log(1+2\sqrt{2})$ \cite{Ro}. As the map $\beta \mapsto \tilde\beta(\b)$ is again non-negative and monotone increasing with $\lim_{\b\to\infty} \tilde\b(\b) = \infty$, there also exists a critical value $\b_{c,2}$ for the existence of multiple solutions of the gradient b.l. equations, which is given by
\begin{equation}
\log(2 \cosh(\b_{c,2}) - 1) = \log (1+2\sqrt{2}) \Leftrightarrow \b_{c,2} = \cosh^{-1}(1+\sqrt{2}).
\end{equation}

For $q\geq 4$ however it is not difficult to see that the gradient b.l. equations are in general no longer of Potts-type. However let $l$ be a $4$-periodic b.l. of the particular form
$$  l(i) = \begin{cases}    
     1, & \text{for } i\text{ mod } q \in \{0,1\} \\
      a, & \text{for } i\text{ mod } q \in \{2,3\}.
        \end{cases}$$
Then the gradient b.l. equations turn into
\begin{equation}\begin{split}
a &=  \left( \frac{\sum_{j=0}^\infty Q(2j+1) + 2 \sum_{j=0}^\infty Q(4j+2) + a \left( Q(0) + \sum_{j=0}^\infty Q(2j+1) + 2 \sum_{j=1}^\infty Q(4j) \right)}{ Q(0) + \sum_{j=0}^\infty Q(2j+1) + 2 \sum_{j=1}^\infty Q(4j)   + a \left( \sum_{j=0}^\infty Q(2j+1) + 2 \sum_{j=0}^\infty Q(4j+2)  \right)} \right)^d \\
&=  \left( \frac{  \frac{1}{2\sinh \b} + \frac{1}{\sinh (2\b)} + a \left( 1 + \frac{1}{2 \sinh \b} + e^{-2\b}\frac{1}{\sinh (2\b)} \right)  }{1 + \frac{1}{2 \sinh \b} + e^{-2\b}\frac{1}{\sinh (2\b)} + a \left(\frac{1}{2\sinh \b} + \frac{1}{\sinh (2\b)} \right)    } \right)^d.
\end{split}\end{equation}
Again, this is now a b.l. equation of Ising type with
\begin{equation}
- \tilde U(+, +) = \log \left( 1 + \frac{1}{\sinh \b} + e^{-2\b}\frac{1}{\sinh (2\b)} \right) = \tilde \b - c
\end{equation}
and
\begin{equation}
- \tilde U(+, -) = \log \left( \frac{1}{2\sinh \b} + \frac{1}{\sinh (2\b)} \right) = -\tilde \b - c. 
\end{equation}
Solving this system of equations for $\tilde\b$ leads to
\begin{equation}
\tilde \b (\b) = \frac{1}{2} \log \left( 2 \cosh(\b) -1 \right).
\end{equation}
It is easily seen that this map is non-negative and monotone increasing with \linebreak $\lim_{\b \to \infty} \tilde\b(\b) = \infty$. Once more we make use of the known critical value for the Ising model, i.e. $\tilde \b_{c,d} = \coth^{-1} (d)$. The critical value $\b_{c,d}$ for the existence of multiple solutions to the gradient b.l. equations is therefore given by 
$$\frac{1}{2} \log \left(2 \cosh(\b_{c,d}) -1 \right) =  \coth^{-1} (d),$$
which means that $\b_{c,d} = \cosh^{-1}\left( \frac{d}{d-1} \right)$.

Note that for $q\geq5$ non-trivial boundary laws of the type  
$$  l(i) = \begin{cases}    
     1, & \text{for } i\text{ mod } q \in \{0,..., \lfloor q/2\rfloor\} \\
      a, & \text{for } i\text{ mod } q \in \{ \lfloor q/2\rfloor +1, ..., q-1\}
        \end{cases}$$
 will not exist since the b.l. equations will be over-determined in this case. However we will see in the following section how to construct models which allow the existence of multiple $q$-periodic boundary laws with $q\geq 5$.

\subsection{Construction of solvable models for higher values of $q$}
In the example above we have studied the periodic gradient b.l. equations for a specific choice of $Q$. We have seen that for $q=2$ and $q=3$ these models can be solved rigorously as they turn into an Ising and a Potts model respectively modulo monotone rescaling. 
Conversely, for higher values of $q\in\N$ it is always possible to construct a transfer operator $Q$ such that the gradient b.l. equations for the $q$-periodic case are of the Potts-type:

Note that, for any allowed transfer operator $Q$ on $\Z$, 
the equation for $q$-periodic boundary laws is equivalent 
to an equation for a clock-model (that is a $\Z_q$-invariant model on $\Z_q$). Indeed

\begin{equation}\begin{split}
a_i &= c \left(\sum_{j \in \Z}  Q(i-j) a_j \right)^d = c \left(\sum_{\bar j \in \Z_q}  T_q Q(i- \bar j) a_{\bar j} \right)^d \\
\end{split}\end{equation}
for every $i \in \Z_q$, where $T_qQ : \Z \to \R$ is given by $T_q Q(m) = \sum_{j\in\Z} Q(qj+m)$. 

Note also the obvious fact that the fuzzy-transformed function $m \mapsto T_q Q(m)$ describing 
the transition operator of the finite-dimensional model inherits the symmetry w.r.t. 
reflections of the spin-difference from the original model, i.e.
$ T_q Q (m)= T_q Q (-m)$. Equivalently, we can say that the transfer operator 
described by $T_q Q$ is a circulant matrix with additional reflection symmetry. 

Hence we see that the dimension of the set of possible $T_q Q$'s modulo 
constant is $D(q)= \lfloor q/2 \rfloor$.  
In particular, $D(2)=1$ and so any model on $\Z$ 
is mapped to an Ising-model with an effective inverse temperature.
Similarly $D(3)=1$,  and so we are back to Potts-model with its own effective inverse temperature. 
This allows to reduce the structure of b.l. solutions 
for any $\Z$-model 
for periods $q=2,3$ to the known results for 
the Ising model and the Potts model. 

For $q\geq 4$ the gradient boundary law equations \eqref{eq:bl} no longer necessarily reduce to the Potts-type for any given transfer operator $Q$. However we can readily construct a family of transfer operators which are mapped to the Potts model under the fuzzy map $T_q$: In this case the fuzzy transfer operator $m \mapsto T_qQ (m)$ should be given by
\begin{equation}
T_qQ(0) = \frac{e^{\tilde\b}}{e^{\tilde\b} + q-1} \quad \text{and} \quad T_qQ(m) =  \frac{1}{e^{\tilde\b} + q-1}
\end{equation}
for any $m \in \{1,...,q-1\}$, where $\tilde \b>0$ is again the inverse temperature of the model. We can now define a transfer operator which meets these equations by simply putting
\begin{equation}\label{LiftQ}
   Q(m) =
   \begin{cases}
     \frac{e^{\tilde\b}}{e^{\tilde\b} + q-1} & \text{if } m=0, \\
     0 & \text{if } |m| >   \lfloor q/2 \rfloor, \\
      \frac{1}{e^{\tilde\b} + q-1} & \text{else }.
   \end{cases}
\end{equation}
Since this transfer operator is no longer strictly positive on $\Z$, it is necessary to adjust the state space $\Omega$ accordingly, such that the local specification remains well defined by \eqref{eq:specQ}. This can be guaranteed by setting 
\begin{equation}
\Omega = \{ \omega \in \Z^V \mid | \omega_i - \omega_j | \leq k \cdot d(i,j) \text{ for all } i,j \in V \}.
\end{equation}
As the proofs of Theorems \ref{PinGGM}, \ref{Mar} and \ref{MainThm} do not rely on the positivity of  the transfer operator, it is possible to construct GGMs as before, and furthermore, there is an effective inverse temperature such that multiple solutions to the b.l. equation exist. 
Not requiring strict positivity for all of $\Z$ makes our setup 
more general 
than the classical Gibbsian setup, since we do not have non-nullness 
for the specification. On the other hand, it is natural to incorporate also 
such cases, as they incorporate the standard nearest neighbor random walk. 
If we do insist on non-nullness, we can still define $Q$ in such a way (adding suitable 
exponentially 
decaying terms) that we recover our clock-models as inverse image of the Potts model:  
Define $Q(k) = e^{-\b k}$ for all $|k| > \lfloor q/2 \rfloor$ with $\b>0$, and 
\begin{equation}
\varphi_m(\beta) := \sum_{j \in \Z \setminus\{0\}} Q(qj+m)
\end{equation}
for $|m| \leq \lfloor q/2 \rfloor$. Clearly $\lim_{\b \to \infty} \varphi_m(\b) = 0$. 
Setting $Q(k) = T_qQ(k) - \varphi_k(\b)$ for all $|k| \leq \lfloor q/2 \rfloor$ and choosing $\b$ large enough, the constructed transfer operator $Q$ will be strictly positive and is mapped to the Potts model under the fuzzy transform. 

For the $q$-state Potts model on a binary tree the tree-automorphism invariant 
boundary laws are known and they form a rich class \cite{KuRo16}.  
Our above remark shows that there are gradient models which have $q$-periodic 
boundary laws corresponding to the Potts boundary laws. 
This means that there are very many non-trivial gradient measures.

\subsection{Correlation decay is governed by the fuzzy chain}
We remark that extremality of gradient Gibbs measures 
is equivalent to tail-triviality when we use the tail sigma-algebra
$\mathcal{T} :=\bigcap_{\L \SS V}\mathcal{T}_{\L}$. Note that this sigma-algebra 
keeps the asymptotic relative height information (and is bigger than the "naive" tail-algebra 
$\bigcap_{\L \SS V}\mathcal{F}_{\L^c}$. 
This can be seen by similar arguments as in \cite[Chapter 7.1]{Ge88} (using the sigma-algebra of events 
which are almost surely invariant under application of all kernels $\gamma'_{\L}$, 
and showing that this sigma-algebra is, up to $\mu'$-nullsets, equal to the tail-events 
$\mathcal{T}$).  

A warning is in order. 
One may be tempted to think that the expression of equation \eqref{drei} implies non-extremality of the l.h.s. in the set of GGMs, 
since it defines GGMs as certain mixtures. This argument would be wrong, as the mixture is made over measures which are not GGMs, and 
the phenomenon is more subtle and deserves further investigation. 
It is well known that the extremality of Gibbs measures is equivalent to having short-range correlations (see \cite[Proposition 7.9]{Ge88}). Even though we are not able to prove (non-)extremality of GGMs via this criterion, we show in the following how their correlation decay is governed by the fuzzy chain. This result might be useful for future analysis.

\begin{figure}
	\beginpgfgraphicnamed{cor}
		\begin{tikzpicture}[scale=1.15]
		
		\node[place] (x) at (-1,0) {$$};
		
		\node[place] (y) at (1,0)  {$$}
			edge  node[auto,swap] {$b_2$}		(x);
		
		\node[place] (w) at +([shift={(60:2)}]y) {$w$}
			edge  	node[auto,swap] {$b_3$}	(y);
		\node[place] (w1) at +([shift={(90:1)}]w) {$$}
			edge  		(w);
		\node[place] (w2) at +([shift={(30:1)}]w) {$$}
			edge  		(w);

		\node[place] (b) at +([shift={(300:2)}]y) {$$}
			edge  		(y);
		\node[place] (b1) at +([shift={(270:1)}]b) {$$}
			edge  		(b);
		\node[place] (b2) at +([shift={(330:1)}]b) {$$}
			edge  		(b);	
		
		\node[place] (a) at +([shift={(120:2)}]x) {$$}
			edge  		(x);
		\node[place] (a1) at +([shift={(90:1)}]a) {$$}
			edge  		(a);
		\node[place] (a2) at +([shift={(150:1)}]a) {$$}
			edge  		(a);	
			
		\node[place] (u) at +([shift={(240:2)}]x) {$u$}
			edge  	node[auto,swap] {$b_1$}	(x);
		\node[place] (u1) at +([shift={(210:1)}]u) {$$}
			edge  		(u);
			
		\node[place] (u2) at +([shift={(270:1)}]u) {$$}
			edge  		(u);	
		
		\draw[dashed] +([shift={(240:0.577)}]u) circle (1cm);
		\node at +([shift={(1,0)}]u2) {$\L$};
		
		\draw[dashed] +([shift={(60:0.577)}]w) circle (1cm);
		\node at +([shift={(1,0)}]w2) {$\D$};
		
		\end{tikzpicture}
	\endpgfgraphicnamed

	\caption{The disjoint subsets $U,W$ are connected via the path $\{ b_1,b_2,b_3\}$.}
	\label{fig:cor}
\end{figure}
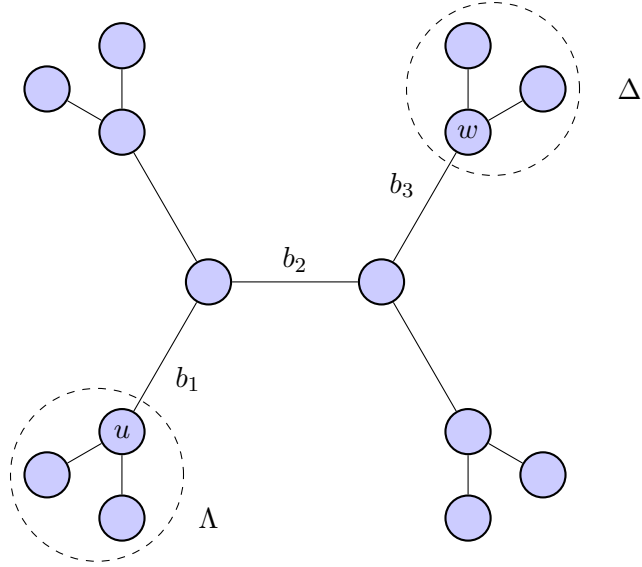

Let $\L \subset V$ and $\D \subset V$ be any finite connected sets with $d(\L,\D) = \inf \{ d(u,w) \mid u\in \L, w \in \D \}$. Assume $d(\L,\D)=n$. Hence there exists a unique path $\G= \{ b_1, b_2, ..., b_n\}$ of length $n$ connecting 
the two vertices $u\in \L$ and $w\in \D$ which realize the infimum. Furthermore let $\zeta_\L \in \Z^{\vec E(\L)}$ and $\zeta_\D \in \Z^{\vec E(\D)}$ be any two gradient configurations on the sub-volumes $\L$ and $\D$ respectively. We have
\begin{equation}\begin{split}\label{cov}
&\nu (\mathbf{1}_{\eta_\L = \zeta_\L} \; \mathbf{1}_{\eta_\D = \zeta_\D}) \\
&= \sum_{s\in\Z_q} \a(s) \nu_{u,s}(\mathbf{1}_{\eta_\L = \zeta_\L}) \sum_{\zeta_{b_1}, \zeta_{b_2},...,\zeta_{b_n}} \prod_{N=1}^n \bar P(T_q(s + \sum_{k=1}^{N-1} \zeta_{b_k}), \zeta_{b_N}) \nu_{w,\tilde s}(\mathbf{1}_{\eta_\D = \zeta_\D}),
\end{split}\end{equation}
where $\tilde s = T_q (s + \sum_{k=1}^n \zeta_{b_k})$. Using the transition matrix of the fuzzy chain $P'$ the r.h.s. of \eqref{cov} can be rewritten as
\begin{equation}
\sum_{s\in\Z_q} \a(s) \nu_{u,s}(\mathbf{1}_{\eta_\L = \zeta_\L}) \sum_{\tilde s \in \Z_q} \left(P'\right)^n(s, \tilde s) \nu_{w,\tilde s}(\mathbf{1}_{\eta_\D=\zeta_\D}).
\end{equation}
Hence, the covariance of $\mathbf{1}_{\eta_\L = \zeta_\L}$ and $\mathbf{1}_{\eta_\D = \zeta_\D}$ under the GGMs of the form \eqref{drei} is given by
\begin{equation}\begin{split}
 |\nu (\mathbf{1}_{\eta_\L=\zeta_\L} & \mathbf{1}_{\eta_\D=\zeta_\D}) - \nu(\mathbf{1}_{\eta_\L=\zeta_\L}) \nu ( \mathbf{1}_{\eta_\D=\zeta_\D})| \\
 &\leq  \sum_{s\in\Z_q} \a(s) \nu_{u,s}(\mathbf{1}_{\eta_\L=\zeta_\L}) \left| \nu_{u,s}(\mathbf{1}_{\eta_\D=\zeta_\D}) - \nu(\mathbf{1}_{\eta_\D=\zeta_\D}) \right| \\
& \leq \sum_{s\in\Z_q} \a(s) \nu_{u,s}(\mathbf{1}_{\eta_\L=\zeta_\L})  \left( \sum_{\tilde s \in \Z_q} \left| \left( P'\right)^n(s, \tilde s) - \a(\tilde s) \right| \right) \nu_{w, \tilde s} (\mathbf{1}_{\eta_\D=\zeta_\D}) \\
& \leq 2 \;  \nu(\mathbf{1}_{\eta_\L=\zeta_\L})  \left(\max_{s\in\Z_q} || \left( P'\right)^n(s, \cdot) -\a||_{TV}\right)  \left(\max_{\tilde s \in \Z_q} \nu_{w, \tilde s} (\mathbf{1}_{\eta_\D=\zeta_\D})\right),
\end{split}\end{equation}
where $|| \cdot ||_{TV}$ denotes the total variational distance. Note that the finite state transition matrix $P' \in M(q \times q; \R)$ always has strictly positive entries. This is certainly true for models where the transfer operator $Q$ is defined via an interaction potential \eqref{Qpotential}, and also holds for the not strictly positive transfer operators we constructed in \eqref{LiftQ}. 
Hence the fuzzy Markov chain associated with $P'$ is aperiodic and irreducible and it follows from the Convergence Theorem for Markov chains \cite[Theorem 4.9]{LPW} that there exist constants $C>0$ and $\delta \in (0,1)$ s.t. $\max_{s\in\Z_q}\Vert \left( P'\right)^n (s, \cdot) - \a \Vert_{TV} \leq C \delta^n$ for all $n \in \N$. 

\subsection{Ising classes. Identifiability of gradient Gibbs measures in terms of boundary laws.}
In the following we will restrict ourselves to the case of Ising classes, i.e. $q$-periodic boundary laws with $q=2$. So far we have seen that $q$-periodic boundary laws allow the construction of GGMs via equation \eqref{drei}. 
Note that there can be at most $3$ b.l. solutions, namely the trivial one $l\equiv 1$, and possibly non-trivial 
ones, of the form $(1,a)$ and $(a,1)$. These boundary laws favor even (respectively odd) layers. 
An inspection of formula \eqref{alt} shows that the gradient measure $\nu$ is the same for both non-trivial boundary law 
solutions. More generally, for any $q$ the boundary laws in the orbit of a $q$-periodic boundary law $l$, 
generated by the  shifts $j\in \Z$,  that is  
$(l(i))_{i\in \Z} \mapsto^j  (l(i+j))_{i\in \Z}$, all result in the same gradient Gibbs measure $\nu$.   

On the other hand, the non-trivial boundary law always results in a different gradient measure than the trivial one. 
To see this, 
let $l(i) = a$ for $i \text{ mod }2=1$, and $l(i) = 1$ for  $i \text{ mod }2=0$. 

Using the alternative representation \eqref{alt}, the single-bond marginal for this measure is given by 
\begin{equation}
\nu_l(\eta_b = \zeta_b) = \frac{1}{Z_l^b} Q(\zeta_b) \sum_{s\in\Z_q} l(s) l(s+ \zeta_b),
\end{equation} 
where $Z_l^b$ is a normalizing constant. If $\nu_l$ and $\nu_{\underline{1}}$ were the same measure we would have
\begin{equation}\begin{split}\label{single}
\frac{\nu_l(\eta_b = 0)}{\nu_{\underline{1}}(\eta_b = 0)} &= \frac{Z^b_{\underline{1}}}{Z^b_l} (1+a^2)/2 = 1, \\
\frac{\nu_l(\eta_b = 1)}{\nu_{\underline{1}}(\eta_b = 1)} &= \frac{Z^b_{\underline{1}}}{Z^b_l} a = 1.
\end{split}\end{equation}
which implies $a = 1$. 

A similar identifiability statement holds for general $q$. More precisely, 
a nontrivial boundary law given by 
$l(i) = a\neq 1$ for $i \text{ mod }q=0$, and $l(i) = 1$ else, yields a different GGM than 
the trivial boundary law. This is shown by a single-bond computation analogous to  \eqref{single}.

\section{Appendix}
\subsection{Coupling measure $\bar\nu$, Gibbsian preservation under transform of measure}

Answering a question of Aernout van Enter, 
let us make an additional comment on the structure which has unfolded. 
We have frequently used projections of the spins to different directions:
On the one hand an infinite-volume  spin-configuration $\omega\mapsto^{T_q} \omega' $ 
maps to a  mod-$q$ fuzzy spin $\omega'$, via our fuzzy map 
$T_q(i) = i\text{ mod } q$.
On the other hand, an infinite-volume 
spin-configuration 
$\omega=(\omega_w,\zeta) \mapsto^{\nabla} \zeta $ also maps to a gradient configuration. The additional 
information needed to recover the spin is provided by its value $\omega_w$ at a pinning site $w$. 

Let us call an infinite-volume gradient configuration $\zeta$ and an infinite-volume fuzzy spin configuration 
$\omega'$ {\em compatible} iff there exists an infinite-volume spin-configuration $\omega$ for which 
$T_q\omega= \omega' $
and $\nabla\omega=\zeta $. This is to say that the two configurations have a joint 
lift to a proper spin configuration. 

The defining function $Q(m)=e^{-U(m)}$ of the gradient model also has a natural 
{\em mod-$q$-fuzzy image}, namely
$$Q'(m) := T_q Q(m) = \sum_{j\in \Z}Q(qj + m)$$ 
Taking a logarithm $Q'$ 
describes a renormalized Hamiltonian for a clock model on $\Z_q$. 

Suppose now we are on a regular tree and have found an (in height-direction) 
$q$-periodic tree-automorphism 
invariant boundary law $l$. 
The definition of the GGMs $\nu$ which are mixed over the fuzzy chain (see formula \eqref{drei}) extends in a natural 
way to a {\em joint measure (or coupling measure)} $\bar\nu(d\omega',d\zeta )$. This 
coupling measure has the following properties:

\begin{enumerate}
\item $\bar \nu\Bigl( \omega' \text{and } \zeta\text{ are  compatible} \Bigr)=1$. 

\item The marginal on gradients $\bar\nu(d\zeta )$ is a tree-automorphism invariant GGM.  

\item The marginal on fuzzy spins $\bar\nu(d\omega')$ is a tree-automorphism invariant 
finite-state Gibbs measure for $Q'$. 
\end{enumerate}

The definition of $\bar \nu$ is given by spelling out its expectation $\bar \nu(F)$ on a bounded 
local observable $F(\zeta_{\L},\omega'_{\L})$ on gradient variables and layer variables 
in a finite volume $\L$. The formula says that we need to substitute the layer variables 
which are obtained by pinning the layer at one site, and the gradient information $\zeta$
and it reads 
\begin{equation}\begin{split}\label{dreiei}
&\bar \nu(F)
=\sum_{\zeta_{\L}}\sum_{\varphi'_w \in \Z_q}
\alpha^{(l)}(\varphi'_w)\prod_{\langle x,y\rangle \in \overrightarrow{E}_w: x,y \in \L}\bar P_{x,y}\Bigl(
T_q\bigl (
\varphi'_w+\sum_{b\in \Gamma(w,x)}\zeta_b
 \bigr),
\zeta_{\langle x,y \rangle}\Bigr)\cr
&\qquad F\Bigl(
\zeta_{\L}, 
\bigl(T_q\bigl (
\varphi'_w+\sum_{b\in \Gamma(w,x)}\zeta_b
 \bigr)\bigr)_{w\in \L}
\Bigr).
\end{split}
\end{equation}
Here we have assumed that the pinning site $w$ is in the finite volume $\L$. 

Then the first property follows by construction, the second is the content of Theorem \ref{MainThm},
and the last one follows using the relation between $P'$ and $\bar P$ given by \eqref{fuz}.

Let us comment now on similarities and differences between earlier uses of transformations 
of Gibbs measures. 
On the one hand, one may say that Property 3 feels like an example of a preservation of 
the Gibbs property under the map $T_q$, and one of the nice situations 
where "Gibbs goes to Gibbs". 
This is not completely true, as there is an important difference, and the situation is slightly more complicated. 
While fuzzy spins $\omega' \in \O' := (\Z_q)^V$ have a natural tree-invariant distribution, namely the fuzzy chain which is Gibbs for $Q'$, and 
gradient configurations $\zeta \in \O^\nabla$ have a tree-invariant GGM, this is {\em not true} for the spins. 
Spins $\omega \in \O$ do not have a natural tree-invariant measure, hence the fuzzy chain 
is not the direct image of a hypothetical measure on spins, as it usually is in studies of RG transforms 
when one starts from a well-defined measure on the spins. 
The best one can do to relate fuzzy spins and gradients is via the coupling measure $\bar \nu$. 

In this sense the theory  presented in this paper is a  {\em generalization} 
of a constructive use of  transformations for which {\em Gibbs goes to Gibbs}, 
via the coupling described above in \eqref{dreiei}.

\begin{figure}
\centering

\beginpgfgraphicnamed{dia}
\begin{tikzpicture}[scale=1.05]
													\node 		(fuzzyMC)		at (0,0)	{$\GG'_{\text{MC}}(T_q Q)$};	 \node   (s1) at (1.5,0) {$\subset$};	\node 		(tiPM)		at (3,0)	{$\M_1^{}(\O') $};
\node 		(BLqti)		at (-4,-3)	{$\bl^{\text{q}}$};		\node 		(nuBLqti)		at (0,-3)	{$\bar \nu^\cdot (\bl^{q}) $};	 \node   (s2) at (1.3,-3) {$\subset$};	\node 		(M1ticp)		at (3,-3)	{$ \M_1^{ \cp}(\O',\O^\nabla) $};
													\node 		(GnQ)		at (0,-6)	{$\GG^\nabla(Q) $};			 \node   (s3) at (1.5,-6) {$\subset$};	\node 		(M1ti)		at (3,-6)	{$\M_1^{}(\O^\nabla) $};
\node 		(BLnti)		at (-4,-9)	{$\bl^{\text{Norm}} $};	\node 		(GMCQ)		at (0,-9)	{$\GG_{\text{MC}}(Q) $};		 \node   (s4) at (1.5,-9) {$\subset$};	\node 		(M1tiQ)		at (3,-9)	{$\M_1^{}(\O) $};

\draw[->] (nuBLqti) to (fuzzyMC);
\draw[->] (BLqti)     to (nuBLqti);
\draw[->] (nuBLqti) to (GnQ);

\draw[->] (M1ticp) to (tiPM);
\draw[->] (M1ticp) to (M1ti);
\draw[->] (BLqti) to [out=270, in=180] (GnQ);

\draw[<->] (BLnti) to (GMCQ);

\node (thm3) at (-3.7,-5.7)    {$\nu^\cdot : l \mapsto \nu^l$};

\node (sr1) at (0,-7.5)		{$\rotsubset$};
\node (sr2) at (3,-7.5)		{$\rotsubset$};

\draw[dashed, gray, thin] 	(-5,-7.5) -- (7.5,-7.5);

\draw (5,1) -- (5,-10); 

\node (fuzzyspins) 	at (6.5,0)    {fuzzy spins $\o'$};
\node (gradients) 	at (6.7,-3)    {compatible $(\o',\eta)$};
\node (gradients) 	at (6.5,-6)    {gradients $\eta$};
\node (spins) 		at (6.5,-9)    {spins $\o$};

\node (t1) at (-1.1,-1.5)  {$(\o', \eta) \mapsto \o'$};
\node (t2) at (-1.1,-4.5)  {$(\o', \eta) \mapsto \eta$};

\node (l1) at (-2,-2.6)   {$\bar \nu^\cdot : l \mapsto \bar \nu^l$};
\node (l2) at (-2,-8.6)   {$\nu^\cdot_{\text{Zac}}: l \mapsto \nu^l$};

\end{tikzpicture}
\endpgfgraphicnamed

\caption{The relationship between $q$-periodic b.l.'s and the (gradient) Gibbs measures is displayed above the dashed line. The classical theory by Zachary for normalizable b.l.'s is visualized below the dashed line. Here all objects appearing are assumed to be tree automorphism invariant.}
	\label{fig:dia}
\end{figure}
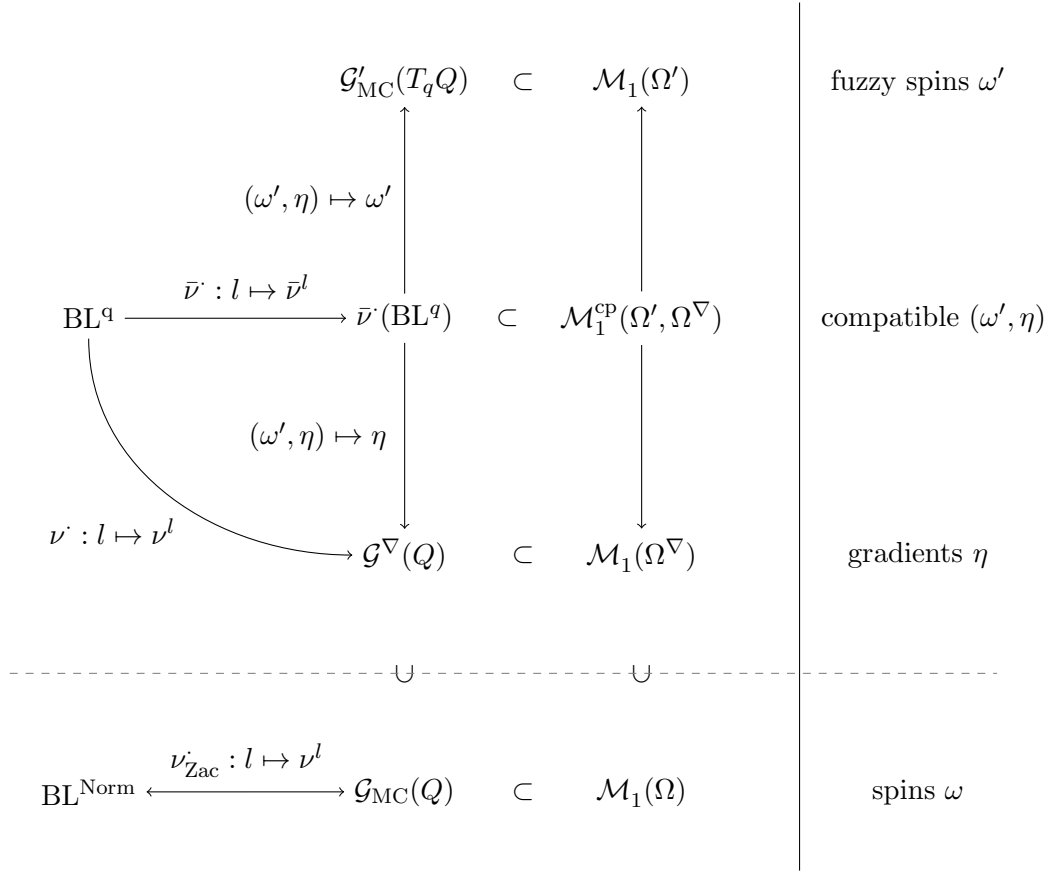

In Figure \ref{fig:dia} the main result of our paper, Theorem \ref{MainThm},
is visualized as the curved arrow. 
Here we have denoted the set of $q$-periodic tree invariant b.l.'s by $\bl^{\text{q}}$, the set of 
Gibbs measures on the fuzzy spins which are tree-indexed Markov chains (splitting Gibbs measures) 
by $\GG'_{\text{MC}}(T_q Q)$, the set of tree-invariant measures on the fuzzy spins by $\M_1^{}(\O')$, the set of coupling measures on $(\O^\nabla, \O')$ which correspond to a b.l. via \eqref{dreiei}
by $\bar \nu^\cdot (\bl^{q})$, the set of measures on the set of {\em compatible} gradient and fuzzy configurations which are tree-invariant by $\M_1^{\cp}(\O^\nabla, \O')$ and the set of tree-invariant measures on $\O^\nabla$ by $\M_1^{}(\O^\nabla)$. 

Below the dashed line we have also given a visualization of the classical theory of Zachary \cite{Z83} 
and its correspondence to our results. 
Every normalizable tree-invariant boundary law $l \in \bl^{\text{Norm}}$ corresponds to a Gibbs measure which is a Markov chain. This set of measures is denoted by $\GG_{\text{MC}}(Q)$. Conversely, every
$\nu \in \GG(Q)$ which is also a Markov chain can be represented by a b.l. which is unique (up to a positive pre-factor). The set of measures $\GG_\text{MC}(Q)$ can be thought of as a subset of the {\em gradient} Gibbs measures $\GG^\nabla(Q)$, as any Gibbs measure gives rise to a gradient measure, but not vice versa. 
Note that by the theory of Zachary applied to finite local state space, there is also a one-to-one correspondence between the elements of $\GG'_{\text{MC}}(T_qQ)$ and $\bl^q$. 
All the objects we construct above the dashed line are new. They are not contained in the theory of Zachary, 
as $\nu^\cdot (\bl^{q}) \subset\GG^\nabla(Q) \setminus \GG_\text{MC}(Q)$, that is our gradient Gibbs measures 
live in the delocalized regime and can not be understood as projection of Gibbs measures to the gradient variables.

\subsection {Mixtures of layer-dependent chains on the tree $\Z$ lack the Gibbs property}

It is interesting to discuss the necessity of our trees having degree $d+1\geq 3$ for 
our construction by means of the following example on the integers. 
Answering a question of Roberto Fern\'andez it shows 
that we may construct translation-invariant gradient measures in one dimension as non-trivial 
mixtures of pinned measures appearing from layer-dependent 
transition probabilities, but they will be lacking the Gibbs property. 

We will consider two classes, i.e. $q=2$. We build a 
translation-invariant gradient measure in terms of mixtures as follows. 
This illustrates the first aspect of Theorem \ref{MainThm}.  
Let us use notation as in Theorem 3. 
Take two layer-dependent Markov chains living on $\Z$ whose transition probabilities 
to make a step $\zeta\in \{-1,0,1\}$ starting from a layer of type $1$ (or $0$ respectively) 
are given by 
\begin{equation}\begin{split}
&\bar P(1,\zeta)=\e_1 1_{|\zeta|=1}+ (1- 2\e_1)1_{|\zeta|=0},\cr
&\bar P(0,\zeta)=\e_0 1_{|\zeta|=1}+ (1- 2\e_0)1_{|\zeta|=0}\cr
\end{split}\end{equation}
with $\e_1\neq \e_0$. 
We define a corresponding fuzzy chain on the state space $\{0,1\}$ by  
$P'(0,0)=1-2 \e_0$, $P'(0,1)=2 \e_0$, $P'(1,0)=2 \e_1$, $P'(1,1)=1-2 \e_1$. 
Its invariant distribution is given by $\frac{\a(1)}{\a(0)}
=\frac{\e_0}{\e_1}$. 
It is then easy to see that formula \eqref{dreiei}  defines a translation-invariant measure on fuzzy spins and gradients. 
Is the marginal of this  measure on the gradients a GGM for some gradient Hamiltonian? 
In general mixing measures over external parameters tends to destroy quasilocality, which has been observed 
in various non-trivial scenarios, for example joint measures of random systems \cite{Ku99}. 


The answer is no, also here, for any choice of $\e_1\neq \e_0$, 
as the following computation of conditional probabilities shows. 
We take $W=L\cup\{b\}\cup R$ to be a finite connected set of bonds, with reference 
bond $b$ in the middle and consider the fraction of single-bond conditional probabilities  
\begin{equation}\begin{split}
&\frac{\nu(\eta_b=0|\eta_L=0,\eta_R=0)}{\nu(\eta_b=1|\eta_L=0,\eta_R=0)}
=\frac{\a(1)(1-2\e_1)C^{|R|+|L|}+\a(0)(1-2\e_0)}{\a(1)\e_1 C^{|L|}+\a(0) \e_0 C^{|R|}}
\end{split}\end{equation}
with the constant $C=\frac{1-2\e_1}{1-2\e_0}$ which we can assume to be strictly bigger than one 
without loss of generality. But this expression tends to infinity when we send 
both $|L|$ and $|R|$  to infinity, implying that $\nu(\eta_b=0|\eta_L=0,\eta_R=0)\rightarrow 1$, 
which would be impossible for a gradient Gibbs measure.




%

%




\section*{Acknowledgement}
This work is supported by Deutsche Forschungsgemeinschaft, RTG 2131
{\em High-dimensional Phenomena in Probability - Fluctuations and Discontinuity}. CK thanks the participants 
of the workshop {\em Transformations in Statistical Mechanics: Pathologies and Remedies}, on the occasion of the 
65'th birthdays of Aernout van Enter and Roberto Fern\'andez, 
held at the Lorentz Center Leiden, 10-14 October 2016, for useful discussions.

\end{document}